\numberwithin{equation}{section}
\newtheorem{theorem}{Theorem}[section]
\newtheorem{lemma}[theorem]{Lemma}
\newtheorem{definition}[theorem]{Definition}
\newtheorem{corollary}[theorem]{Corollary}
\newtheorem{remark}[theorem]{Remark}
\newcommand{\R}{\mathbb{R}}
\newcommand{\N}{\mathbb{N}}
\newcommand{\B}{\mathfrak{B}}
\newcommand{\g}{\mathfrak{g}}
\begin{document}

\title{Existence of polyharmonic maps in critical dimensions}

\author{
Weiyong He
\footnote{Department of Mathematics, University of Oregon, Eugene, OR 97403, USA (whe@uoregon.edu)}
\qquad \and
Ruiqi Jiang
\footnote{Corresponding author. School of Mathematics, Hunan University, Changsha, 410082, P. R. China (jiangruiqi@hnu.edu.cn)}
\qquad \and
Longzhi  Lin
\footnote{Mathematics Department, University of California - Santa Cruz, 1156 High Street, Santa Cruz, CA 95064, USA (lzlin@ucsc.edu)}
}

\date{}
\maketitle

\begin{abstract}
We prove that for any two closed Riemannian manifolds $M^{2m}$ ($m\geq 1$) and $N$, there exists a minimizing (extrinsic) $m$-polyharmonic map for every free homotopy class in $[M^{2m}, N]$,  provided that
the homotopy group $\pi_{2m}(N)$ is trivial. This generalizes the celebrated existence results for harmonic maps and biharmonic maps. We also prove that there exists a non-constant smooth polyharmonic map from $\R^{2m}$ to $N$ by a blowup analysis at an energy-concentration point for an energy-minimizing sequence if the convergence fails to be strong.
\end{abstract}

\section{Introduction}
Let $(M,g)$, $(N,h)$ be smooth compact Riemannian manifolds without boundaries.  Assume that $(N, h)$ is isometrically embedded into an Euclidean space $\R^{K}$. Then we define
\begin{align*}
W^{k,p}(M,N)=\big \{u\in W^{k,p}(M,\R^K):\,\, u(x) \in N, \, a.e.\, x\in M \big \}
\end{align*}
where $k$ is a non-negative integer and $p\in [1,\infty)$.

In this paper, we consider the \emph{$m$-polyenergy} functional, for all $m \in \N^+$, $u\in W^{m,2}(M,N)$,
\begin{equation}\label{funct:main}
E_m (u)=\int_M \big|\Delta^{\frac{m}{2}} u \big|^2 dM_g:=
\left\{
\begin{aligned}
&\int_M |\nabla \Delta^{k-1} u|^2 \,dM_g, \quad m=2k-1, \\
&\int_M |\Delta^k u|^2 \,dM_g, \qquad m=2k,
\end{aligned}
\right.
\end{equation}
where $\nabla$ and $\Delta$ are Levi-Civita connection and Laplace-Beltrami operator on $(M, g)$ respectively, and $dM_g$ denotes the volume element of $(M, g)$. We call the critical points of $E_m(u)$ \emph{extrinsic $m$-polyharmonic maps}. The main result of this paper is the following:
\begin{theorem}\label{thm:main}
Suppose the dimension of $M$ is $2m$ and the homotopy group $\pi_{2m}(N)$ is trivial. For any free homotopy class $\alpha \in [M,N]$, there exists a smooth extrinsic $m$-polyharmonic map $u_\alpha:M \rightarrow N \subset \R^K$ which minimizes the energy functional $E_m(u)$ in the homotopy class $\alpha$.
\end{theorem}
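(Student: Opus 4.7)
The plan is to run a Sacks--Uhlenbeck style direct method adapted to the higher-order functional $E_m$ at its critical dimension $\dim M = 2m$, where $E_m$ is invariant under Euclidean dilations (this is precisely what makes $n=2m$ critical). Let $\{u_j\}\subset W^{m,2}(M,N)$ be a minimizing sequence for $E_m$ within the free homotopy class $\alpha$, so $E_m(u_j)\downarrow E_\alpha:=\inf_\alpha E_m$. After passing to a subsequence, $u_j \rightharpoonup u_\infty$ weakly in $W^{m,2}$, and the energy densities converge as Radon measures to $|\Delta^{m/2}u_\infty|^2\,dM_g + \mu$ for some nonnegative defect measure $\mu$. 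Standard concentration-compactness shows that the atomic part of $\mu$ sits on a finite set $\Sigma=\{x_1,\dots,x_L\}\subset M$ and carries mass at least $\varepsilon_0>0$ at each atom, where $\varepsilon_0$ is the constant in an $\varepsilon$-regularity theorem for energy-minimizing $m$-polyharmonic maps.

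Away from $\Sigma$, the $\varepsilon$-regularity produces uniform $C^\ell_{\mathrm{loc}}$-bounds on the $u_j$, so $u_j\to u_\infty$ in $C^\ell_{\mathrm{loc}}(M\setminus\Sigma)$ and $u_\infty$ is a smooth weakly $m$-polyharmonic map on $M\setminus\Sigma$. An iterated removable singularity argument, applied at each $x_\ell\in\Sigma$ using the $W^{m,2}$-bound, extends $u_\infty$ to a smooth polyharmonic map on all of $M$. The remaining danger is that bubbles concentrating at the $x_\ell$ carry off both energy and homotopy information, so that a priori $u_\infty$ need not lie in $\alpha$ nor achieve $E_\alpha$.

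The technical heart of the argument, and the principal obstacle, is bubble extraction. At each $x_\ell$ I would choose scales $\lambda_j\to 0$ that normalize a definite portion of the concentrating energy inside the unit ball, and set $v_{j,\ell}(x)=u_j(x_\ell+\lambda_j x)$; scale invariance of $E_m$ in dimension $2m$ gives uniform $W^{m,2}_{\mathrm{loc}}(\R^{2m})$-bounds on the $v_{j,\ell}$. Passing to subsequential weak limits and iterating on any newly appearing concentration points yields a non-constant, finite-energy smooth $m$-polyharmonic map $\omega_\ell:\R^{2m}\to N$. A removable singularity theorem at infinity for finite-energy $m$-polyharmonic maps then compactifies $\omega_\ell$ to a smooth $\bar\omega_\ell:\Sp^{2m}\to N$; this is precisely the second claim of the abstract and is the most delicate analytic input. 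Standard energy quantization gives $E_m(\bar\omega_\ell)\ge \varepsilon_0>0$.

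The hypothesis $\pi_{2m}(N)=0$ now enters decisively: every $\bar\omega_\ell$ is null-homotopic in $N$. A neck analysis at each concentration point therefore permits us to excise the bubble and reconnect $u_\infty$ through an arbitrarily small-energy null-homotopy, showing that $u_\infty$ itself represents $\alpha$; in particular $E_m(u_\infty)\ge E_\alpha$. Combined with the energy inequality
\[
E_\alpha = \lim_{j\to\infty} E_m(u_j) \;\ge\; E_m(u_\infty) + \sum_{\ell=1}^{L} E_m(\bar\omega_\ell) \;\ge\; E_\alpha + L\varepsilon_0,
\]
this forces $L=0$. Hence no bubbling occurs, $u_j\to u_\infty$ strongly in $W^{m,2}(M,N)$, and $u_\infty$ is the desired minimizer. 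Smoothness of $u_\infty$ on all of $M$ then follows from minimality together with the $\varepsilon$-regularity theorem and interior elliptic bootstrap for the $m$-polyharmonic map system.
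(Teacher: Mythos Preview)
Your outline has a fundamental gap at its very first analytic step. You invoke an $\varepsilon$-regularity theorem for ``energy-minimizing $m$-polyharmonic maps'' and then claim uniform $C^\ell_{\mathrm{loc}}$-bounds on the $u_j$ away from $\Sigma$. But the $u_j$ are \emph{not} polyharmonic maps---they are arbitrary elements of a minimizing sequence and satisfy no Euler--Lagrange system whatsoever. Standard $\varepsilon$-regularity (of Schoen--Uhlenbeck type) requires an elliptic equation to bootstrap from small energy to smoothness; here there is no equation. The paper makes this point explicitly: the authors cannot follow the Sacks--Uhlenbeck route of perturbing the functional, and their $\varepsilon$-regularity is of an entirely different nature.

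What the paper actually proves is much weaker and much harder: if the limiting measure $\mu$ has small mass on $B_2(p)$, then the defect measure $\nu$ vanishes on $B_1(p)$, i.e.\ $u_k\to u^*$ \emph{strongly in $W^{m,2}$} (not in $C^\ell$) on that ball. The mechanism is a comparison argument: one constructs by hand a competitor $\widetilde u_k\in W^{m,2}(M,N)$ equal to $u^*$ on $B_{1-j^{-1}}(p)$ and to $u_k$ outside $B_1(p)$, using a delicate averaging to keep the interpolant within the tubular neighborhood of $N$ (this $W^{m,2}$ extension theorem is the paper's main technical novelty). One then checks $\widetilde u_k\sim u_k$ via a $W^{1,2m}$-closeness criterion, so $E_m(\widetilde u_k)\ge \mathcal E(m,\alpha)$, and comparing energies forces the defect to vanish. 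The same comparison construction, together with $\pi_{2m}(N)=0$ to guarantee homotopy of the glued map, is what rules out concentration---no energy quantization, no removable singularity at infinity, and no compactification of bubbles to $\Sp^{2m}$ is needed or proved.
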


When $m=1$, this is a classical result of Lemaire \cite{L79}, Sacks-Uhlenbeck \cite[Theorem 5.1]{SU0} and Schoen-Yau \cite{SY79}. When $m=2$, this is proved by C.Y. Wang \cite[Theorem B]{Wang} using the biharmonic map heat flow. Our method is motivated by \cite{He18} where the first author proved the existence of biharmonic almost complex structures in a fixed homotopy class. Similar ideas have been used by F. Lin  in \cite{Lin99} for $p$-harmonic maps. Our approach is different from the one  in the classical work of Sacks-Uhlenbeck \cite{SU0}, where the authors considered a perturbed elliptic system of the harmonic maps and proved energy concentration at isolated singularities. The elliptic system for polyharmonic maps is much more complicated and a perturbed elliptic system is even more complicated. We are not able to follow the approach as in \cite{SU0}.

Instead we consider an energy-minimizing sequence in a fixed homotopy class which converges weakly to a limit. By proving an $\epsilon$-regularity for the minimizing sequence, we conclude that if the convergence fails to be strong, then there has to be energy concentration at some isolated points. Then the condition that $\pi_{2m}(N)$ is trivial excludes energy concentration phenomenon and Theorem \ref{thm:main} follows.
We also show that there exists a non-constant smooth extrinsic polyharmonic map from $\R^{2m}$ to $N$ by blowup analysis provided that the energy concentration occurs.

Our $\epsilon$-regularity is different from the well-known regularity in the theory of harmonic maps by Schoen-Uhlenbeck \cite{SU2} since we do \textit{not} have an elliptic system to deal with.
To prove $\epsilon$-regularity for the minimizing sequence, the main technical point is to construct a new sequence of almost energy-minimizing sequence using extension theorem in $W^{m, 2}(M,N)$, as indicated in \cite{He18} in the case of $W^{2, 2}$ almost complex structures. Such extension theorem is only known previously for $W^{1, p}$ map in the theory of harmonic maps.
Establishing such extension theorem for $W^{m, 2}$ Sobolev maps in critical dimensions seems to be of independent interest.

Our method should have many applications in other setting, in particular for intrinsic polyharmonic maps (see the existence question raised by Eells and Lemaire in \cite[Problem (8.8)]{EL83}, even though there is a substantial technical difficulty in the intrinsic setting.
We shall consider this elsewhere.

The paper is organized as follows. In Section \ref{sec:prelim}, we gather some facts concerning the homotopy classes in critical Sobolev space $W^{k,p}(M,N)$ with $kp=dimM$. In Section \ref{sec:proof}, we focus on proving Theorem \ref{thm:main}. The $\epsilon$-regularity for an energy-minimizing sequence in a fixed homotopy class is established in Subsection \ref{subsec:regularity}. Under the condition that $\pi_{2m}(N)$ is trivial, the existence of minimizers is presented in Subsection \ref{subsec:existence}. In Section \ref{sec:blowup}, we prove that there exists at least one non-constant smooth extrinsic $m$-polyharmonic map from $\R^{2m}$ to $N$ if the energy-concentration points exist.

\medskip
\noindent \textbf{Convention:} Throughout the paper, we assume that $(M, g)$, $(N, h)$ are closed Riemannian manifolds and that $(N, h)$ is isometrically embedded into an Euclidean space $\R^{K}$.
\section{Preliminaries}\label{sec:prelim}
In this section, we recall some well-known results about the homotopy classes in critical Sobolev space $W^{k,p}(M,N)$ with $kp=n=dimM$, which was considered by White (see \cite{White1,White2}). For the convenience of reader, we show how to define homotopy classes in critical Sobolev space. First of all, we recall the density theorem for Sobolev spaces (see \cite{SU,Bethuel,BPV1,BPV2} and the references therein).

\begin{theorem}\label{thm:density}
If $kp\geq n=dim M$, then $C^\infty (M,N)$ is dense in $W^{k,p}(M,N)$ with respect to $W^{k,p}$-norm.
\end{theorem}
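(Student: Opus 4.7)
The plan is to adapt the classical mollification-and-project argument of Schoen--Uhlenbeck (for $k=1$), extended by Bethuel and others to higher-order Sobolev maps. The strategy has three ingredients: a smooth nearest-point projection onto $N$, convolution smoothing of the map $u$, and control of the composition via the chain rule.

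First I would fix a tubular neighborhood $\mathcal{V}\subset\R^K$ of $N$ on which the nearest-point retraction $\pi_N\colon\mathcal{V}\to N$ is smooth. Covering $M$ by finitely many coordinate charts and using a partition of unity, I reduce the problem to approximating $\R^K$-valued maps over domains in $\R^n$. For $u\in W^{k,p}(M,N)$ and a standard mollifier $\rho_\epsilon$, let $u_\epsilon:=\rho_\epsilon * u$, which lies in $C^\infty(M,\R^K)$ and satisfies $u_\epsilon\to u$ in $W^{k,p}(M,\R^K)$ by standard mollification theory.

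The critical point is to show that for small $\epsilon$, $u_\epsilon(M)\subset\mathcal{V}$. When $kp>n$ this is immediate: Morrey's embedding $W^{k,p}\hookrightarrow C^0$ yields $u_\epsilon\to u$ uniformly, so $u_\epsilon(x)$ stays uniformly close to $u(x)\in N$. The borderline case $kp=n$ requires more care; here one uses the embedding $W^{k,p}(M)\hookrightarrow \mathrm{VMO}(M)$ together with the characterization of VMO maps via small mean oscillations. Since $u$ takes values in $N$, the averages $\tfrac{1}{|B_r(x)|}\int_{B_r(x)} u\, dy$ sit in $\mathcal{V}$ for small $r$, uniformly in $x$, and hence so does $u_\epsilon(x)=\rho_\epsilon*u(x)$ for small $\epsilon$. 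Once $u_\epsilon(M)\subset\mathcal{V}$, set $\tilde{u}_\epsilon:=\pi_N\circ u_\epsilon$; this is a smooth map into $N$.

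It remains to show $\tilde{u}_\epsilon\to u$ in $W^{k,p}(M,\R^K)$. Using the Fa\`a di Bruno formula, $D^j\tilde{u}_\epsilon$ is a polynomial in the derivatives $D^i u_\epsilon$ ($i\leq j$) with coefficients given by derivatives of $\pi_N$ evaluated on the compact set $u_\epsilon(M)\subset\mathcal{V}$, which stay uniformly bounded. Convergence $D^i u_\epsilon\to D^i u$ in $L^{p_i}$ for appropriate exponents $p_i$ (via Gagliardo--Nirenberg interpolation, which closes precisely because $kp\geq n$), combined with dominated convergence, then yields $D^j\tilde{u}_\epsilon\to D^j(\pi_N\circ u)=D^j u$ in $L^p$ for all $j\leq k$.

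The main obstacle is the critical endpoint $kp=n$: Morrey's embedding just fails, so the uniform-closeness step above relies on the VMO argument of Brezis--Nirenberg, and the composition estimate requires the exact Gagliardo--Nirenberg interpolations balancing higher-order derivatives against each other in the multilinear expansion of $D^j(\pi_N\circ u_\epsilon)$. These pieces are technically delicate, but both are classical in the literature cited in the excerpt.
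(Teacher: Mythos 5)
The paper does not prove Theorem~\ref{thm:density}: it is stated as a recollection of known results, with the proof deferred entirely to the cited references (Schoen--Uhlenbeck, Bethuel, Bousquet--Ponce--Van Schaftingen). So there is no in-paper argument to compare against, and the right question is whether your sketch faithfully captures the standard proof from that literature. It essentially does: the mollify-and-project scheme, the dichotomy between the supercritical case $kp>n$ (Morrey, uniform convergence) and the critical case $kp=n$ (the $W^{k,p}\hookrightarrow\mathrm{VMO}$ embedding and the Brezis--Nirenberg small-mean-oscillation argument for staying in the tubular neighborhood), and the Fa\`a di Bruno plus Gagliardo--Nirenberg control of $D^j(\pi_N\circ u_\epsilon)$ are precisely the ingredients one finds there. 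The fact that $u$ takes values in a compact manifold (hence $u\in L^\infty$) is what makes the Gagliardo--Nirenberg interpolation $D^iu\in L^{kp/i}$ available, so the multilinear terms in the chain-rule expansion land in $L^p$ by H\"older; you implicitly use this, and it is correct.

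One point in your sketch deserves a warning: the phrase ``using a partition of unity, I reduce the problem to approximating $\R^K$-valued maps over domains in $\R^n$'' cannot mean decomposing $u=\sum_i\phi_i u$ and approximating each summand, since $\phi_i u$ no longer takes values near $N$ and the projection step breaks. What the cited references actually do is keep $u$ intact and only localize the \emph{smoothing operator} (or, equivalently, work on a global Riemannian or Euclidean mollification after embedding the domain), so that the local averages entering the VMO argument are averages of the genuine $N$-valued map $u$. Your overall strategy is sound and matches the literature the paper cites, but the localization step should be phrased so as to preserve this feature; as written it reads as though you are chopping $u$ itself, which would not work.
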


Then, let us introduce a very simple way to verify that two Lipschitz maps from $M$ to $N$ are homotopic.
\begin{theorem}[B. White \cite{White1}]\label{thm:lip_homotopic}
Suppose $f,g : M \rightarrow N$ are Lipschitz and $n=dim M$. Then there exists a positive constant $\eta_0$ such that if
\begin{align*}
\| f-g\|_{W^{1,n}}\leq \eta_0,
\end{align*}
then $f$ and $g$ are homotopic.
\end{theorem}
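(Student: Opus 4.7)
The plan is to construct a homotopy via the straight-line interpolation in $\R^K$ followed by the nearest-point projection onto $N$. I would first fix a tubular neighborhood $\mathcal{N}_\delta = \{y \in \R^K : \mathrm{dist}(y, N) < \delta\}$ of $N$ together with the smooth nearest-point retraction $\Pi : \mathcal{N}_\delta \to N$, where $\delta > 0$ depends only on the embedding $(N, h) \hookrightarrow \R^K$. If one can arrange $|f(x) - g(x)| < \delta$ pointwise, then $H(x, t) := (1-t) f(x) + t g(x)$ satisfies $\mathrm{dist}(H(x, t), N) \leq |H(x,t) - f(x)| \leq |g(x) - f(x)| < \delta$, since the segment from $f(x)$ to $g(x)$ stays within distance $|f(x)-g(x)|$ of the point $f(x) \in N$. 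Hence $\widetilde H(x, t) := \Pi(H(x, t))$ is a continuous homotopy from $f$ to $g$ in $N$.

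The main step is therefore the uniform estimate $\|f - g\|_{L^\infty} < \delta$ extracted from the $W^{1,n}$-smallness. Write $h := f - g$ and $L := \max\{\mathrm{Lip}(f), \mathrm{Lip}(g)\}$, so that $h$ is $2L$-Lipschitz. Suppose $|h(x_0)| = A := \|h\|_{L^\infty}$ is attained at some $x_0 \in M$. Then $|h(x)| \geq A/2$ on the geodesic ball $B_r(x_0)$ of radius $r := A/(4L)$. As long as $r$ lies below the injectivity radius of $(M, g)$, a standard volume comparison yields
\[
\|h\|_{L^n}^n \;\geq\; c(n, g)\,(A/2)^n\, r^n \;=\; c'(n, g)\,\frac{A^{2n}}{L^n},
\]
so that $A \leq C \sqrt{L\, \|h\|_{L^n}} \leq C \sqrt{L\, \|f - g\|_{W^{1,n}}}$. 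Taking $\eta_0 \leq \delta^2/(C^2 L)$ forces $A < \delta$, as required.

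The main obstacle is minor but worth tracking carefully: one must ensure the radius $r = A/(4L)$ is below the injectivity radius of $(M, g)$ so that the Euclidean-type volume lower bound applies. This can be arranged by first shrinking $\eta_0$ using the a priori bound $A \leq \mathrm{diam}(N)$ together with the value of $L$; the degenerate small-$L$ case (in which $f$ and $g$ are themselves nearly constant) can be handled directly by geodesically interpolating their images within a fixed small geodesic ball of $N$. The resulting threshold $\eta_0$ depends on $(M, g)$, $(N, h)$, and on the Lipschitz constants of $f$ and $g$, which is consistent with the statement of the theorem.
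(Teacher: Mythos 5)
Your argument is internally consistent, but it proves a weaker statement than the theorem requires, and the weakening is fatal for the way the theorem is used in this paper. At the end you say the constant $\eta_0$ you produce ``depends on $(M,g)$, $(N,h)$, and on the Lipschitz constants of $f$ and $g$,'' and you assert this is consistent with the statement. It is not: White's theorem (and what the rest of the paper needs) is that $\eta_0$ depends \emph{only} on $(M,g)$ and $(N,h)$, uniformly over all Lipschitz $f,g$. Read Corollary~\ref{coro:Wkp_homotopic} and Corollary~\ref{coro:W1n_homotopic}: there the threshold must be independent of the maps. In the proof of Corollary~\ref{coro:W1n_homotopic} one approximates $f,g\in W^{k,p}$ by smooth $f_\infty,g_\infty$ whose Lipschitz constants are completely uncontrolled (they typically blow up as $\delta\to 0$), and then invokes Theorem~\ref{thm:lip_homotopic} for $f_\infty,g_\infty$. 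If $\eta_0$ degenerated as $\mathrm{Lip}(f_\infty),\mathrm{Lip}(g_\infty)\to\infty$, this step would collapse. So a constant depending on $L=\max\{\mathrm{Lip}(f),\mathrm{Lip}(g)\}$ does not suffice.

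The obstruction to removing the $L$-dependence from your argument is structural, not a matter of sharpening the estimate: in the critical case $p=n=\dim M$, the space $W^{1,n}$ does \emph{not} embed into $L^\infty$, so there is no way to pass from $\|f-g\|_{W^{1,n}}$ small to $\|f-g\|_{L^\infty}$ small with a constant independent of the Lipschitz bound. Your inequality $A\le C\sqrt{L\,\|f-g\|_{W^{1,n}}}$ is essentially optimal for this route, and it necessarily degenerates as $L\to\infty$. White's actual proof takes a genuinely different path: rather than trying to control $\|f-g\|_{L^\infty}$, it exploits the conformal invariance of $\int|\nabla u|^n$ and an averaged Fubini/co-area argument over the skeleta of a fine triangulation to show that the $W^{1,n}$-norm controls the homotopy type of the restriction of a Lipschitz map to the $n$-skeleton (which in dimension $n$ is all of $M$), with a threshold independent of the Lipschitz constant. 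That degree-theoretic mechanism is exactly what your straight-line-plus-retraction construction misses. As written, your proof establishes a version of the theorem that is correct but strictly weaker, and insufficient for its role in Section~\ref{sec:prelim}.
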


As a direct consequence of Theorem \ref{thm:lip_homotopic} and the Sobolev embedding theorem, we have the following result.
\begin{corollary}\label{coro:Wkp_homotopic}
Suppose $f,g\in C^\infty(M,N)$ and $kp=n=dimM$. Then there exists a positive constant $\eta$ depending only on $M,N,k,p$ such that if
$$\| f-g\|_{W^{k,p}}\leq \eta,$$
then $f$ and $g$ are homotopic.
\end{corollary}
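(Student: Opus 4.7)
The plan is to reduce Corollary 2.3 directly to Theorem 2.2 via a Sobolev embedding. Since $M$ is a closed manifold of dimension $n$ and any $f \in C^\infty(M,N)$ is automatically Lipschitz, the only thing to check is that a small $W^{k,p}$ norm on $f-g$ forces a small $W^{1,n}$ norm, so that Theorem 2.2 (with its constant $\eta_0$) can be applied to the pair.

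Concretely, first I would invoke the Sobolev embedding theorem on the compact manifold $M$ to obtain a continuous inclusion
\begin{equation*}
W^{k,p}(M) \hookrightarrow W^{1,n}(M),
\end{equation*}
with an embedding constant $C = C(M,k,p)$ depending only on the geometry of $M$ and on $k,p$. This is valid precisely in the borderline regime $kp = n$: taking $j = k-1$ derivatives gives $\tfrac{1}{q} = \tfrac{1}{p} - \tfrac{k-1}{n} = \tfrac{k}{n} - \tfrac{k-1}{n} = \tfrac{1}{n}$, which yields $W^{k,p} \hookrightarrow W^{1,n}$ (when $k = 1$ the inclusion is the identity). Note that $f - g$ is an $\R^K$-valued $W^{k,p}$-map, so this embedding may be applied component-wise to $f-g$ viewed as a map into $\R^K \supset N$.

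Second, I would set $\eta := \eta_0 / C$, where $\eta_0$ is the constant produced by Theorem 2.2 for the target $N$ and the dimension $n = \dim M$. If $\|f - g\|_{W^{k,p}} \leq \eta$, then by the embedding
\begin{equation*}
\| f - g \|_{W^{1,n}} \leq C \| f - g \|_{W^{k,p}} \leq \eta_0.
\end{equation*}
Since $f,g \in C^\infty(M,N)$ are Lipschitz (by compactness of $M$), Theorem 2.2 applies and yields a homotopy between $f$ and $g$.

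There is no real obstacle here beyond keeping track of the critical Sobolev exponent; the only subtlety to flag is that the embedding $W^{k,p} \hookrightarrow W^{1,n}$ is being used exactly at the borderline $kp = n$, and it goes through because we only need to drop $k-1$ derivatives (not $k$), which keeps us strictly inside the range where the Sobolev embedding gives an $L^q$ inclusion with a finite exponent. The dependence of $\eta$ on $M,N,k,p$ is inherited from $\eta_0$ (which depends on $N$ and $n = \dim M$) and from the embedding constant $C$.
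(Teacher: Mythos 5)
Your proposal is correct and follows exactly the route the paper intends: the paper states the corollary as ``a direct consequence of Theorem \ref{thm:lip_homotopic} and the Sobolev embedding theorem'' without further detail, and your argument supplies precisely that — the critical embedding $W^{k,p}\hookrightarrow W^{1,n}$ (with the exponent bookkeeping $\tfrac{1}{p}-\tfrac{k-1}{n}=\tfrac{1}{n}$ carried out correctly) followed by an application of Theorem \ref{thm:lip_homotopic} with $\eta=\eta_0/C$.
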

It is worth pointing out that above result tells us that if two smooth maps are close enough to each other with respect to critical $W^{k,p}$-norm, then they are homotopic.

\medskip

We are now in a position to define the homotopy classes in critical Sobolev spaces.
\begin{definition}
Suppose $f,g \in W^{k,p}(M,N)$ and $kp=n=dimM$, we say $f$ is homotopic to $g$, also denoted by $f \sim g$, if for all $\delta>0$, there always exist two maps $f_{\infty}, g_{\infty}\in C^\infty (M,N)$ such that $f_{\infty}$ and $g_{\infty}$ are homotopic and
\begin{align*}
\|f-f_{\infty}\|_{W^{k,p}}+\|g-g_{\infty}\|_{W^{k,p}} \leq \delta.
\end{align*}
\end{definition}
It follows from Theorem \ref{thm:density} and Corollary \ref{coro:Wkp_homotopic} that the homotopic relation in critical Sobolev space is an equivalence relation and is a generalization of that in $C^0(M,N)$. Note that for any given homotopy class $\alpha$ in critical Sobolev spaces $W^{k,p}(M,N)$ (i.e., $kp=n=dimM$), there always exists $f_\alpha \in C^\infty(M,N) \in \alpha$. Moreover, there holds
\begin{align*}
\alpha=\{f\in W^{k,p}(M,N):\,f\sim f_\alpha \}
=\mbox{the $\|\cdot \|_{W^{k,p}}$ completion of}\, \{f \in C^\infty(M,N): f \sim f_\alpha \}.
\end{align*}
Then it follows that for $n=dim M=2m$ and $f_0 \in C^\infty(M,N)$, there holds that
\begin{align}\label{eqn:Em_equiv}
\inf \{E_m(u) \,|\, u \in W^{m,2}(M,N), \, u \sim f_0 \}
=\inf \{E_m(u) \,|\, u \in C^\infty (M,N), \, u \sim f_0 \}.
\end{align}

In practice, we can apply the same method as Theorem \ref{thm:lip_homotopic} to verify that two maps in critical Sobolev spaces are homotopic.
\begin{corollary}\label{coro:W1n_homotopic}
Suppose $f,g \in W^{k,p}(M,N)$ and $kp=n=dimM$. There exists a positive constant $\varepsilon_0$ such that if
\begin{align*}
\| f-g\|_{W^{1,n}}\leq \varepsilon_0,
\end{align*}
then $f$ and $g$ are homotopic.
\end{corollary}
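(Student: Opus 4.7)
The plan is to reduce the statement to the Lipschitz case (Theorem \ref{thm:lip_homotopic}) via the density theorem. I would first fix constants: let $\eta_0$ be the constant in Theorem \ref{thm:lip_homotopic} and declare $\varepsilon_0 := \eta_0/2$. Then, to verify $f \sim g$ in the sense of homotopy in critical Sobolev space, I must produce, for every $\delta > 0$, smooth maps $f_\infty, g_\infty \in C^\infty(M, N)$ that are classically homotopic and satisfy $\|f - f_\infty\|_{W^{k,p}} + \|g - g_\infty\|_{W^{k,p}} \leq \delta$. The density theorem \ref{thm:density} furnishes such smooth approximants.

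The key analytic input is the Sobolev embedding $W^{k,p}(M, \R^K) \hookrightarrow W^{1,n}(M, \R^K)$ at the critical scaling $kp = n$. This is a standard subcritical embedding applied to the derivative: $\nabla u \in W^{k-1, p}$, and since $(k-1)p = n - p < n$, one has $W^{k-1, p} \hookrightarrow L^n$ with $\frac{1}{n} = \frac{1}{p} - \frac{k-1}{n}$. Letting $C = C(M, k, p)$ be the embedding constant, I would refine the choice of smooth approximants so that
\begin{equation*}
\|f - f_\infty\|_{W^{k,p}} + \|g - g_\infty\|_{W^{k,p}} \leq \min\!\left\{\delta,\ \frac{\eta_0}{4C}\right\}.
\end{equation*}
A triangle inequality then transfers the hypothesis $\|f - g\|_{W^{1,n}} \leq \varepsilon_0 = \eta_0/2$ to the smooth pair:
\begin{equation*}
\|f_\infty - g_\infty\|_{W^{1,n}} \leq C \|f - f_\infty\|_{W^{k,p}} + \|f - g\|_{W^{1,n}} + C \|g - g_\infty\|_{W^{k,p}} \leq \frac{\eta_0}{4} + \frac{\eta_0}{2} + \frac{\eta_0}{4} = \eta_0.
\end{equation*}

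Because $f_\infty, g_\infty$ are smooth on the compact manifold $M$, they are Lipschitz, so Theorem \ref{thm:lip_homotopic} gives that $f_\infty$ and $g_\infty$ are classically homotopic. Since $\delta > 0$ was arbitrary, the definition of the Sobolev homotopy relation yields $f \sim g$. The proof is essentially bookkeeping once the right constants are fixed; the only nontrivial point is identifying the subcritical embedding $W^{k,p} \hookrightarrow W^{1,n}$ at $kp = n$, which is routine, and I do not anticipate any serious obstacle beyond choosing the approximation error small enough to absorb into the Lipschitz threshold $\eta_0$.
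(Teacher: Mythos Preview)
Your proposal is correct and follows essentially the same route as the paper's proof: approximate $f,g$ by smooth maps via the density theorem, use the Sobolev embedding $W^{k,p}\hookrightarrow W^{1,n}$ and the triangle inequality to transfer the $W^{1,n}$-closeness to the smooth pair, then invoke Theorem~\ref{thm:lip_homotopic}. The only cosmetic difference is that you fix $\varepsilon_0=\eta_0/2$ and the approximation error explicitly, whereas the paper leaves the choice of $\delta_0,\varepsilon_0$ with $C\delta_0+\varepsilon_0\leq\eta_0$ implicit.
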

\begin{proof}
It follows from Theorem \ref{thm:density} that for all $\delta >0$, there exists two maps $f_{\infty}, g_{\infty} \in C^\infty(M,N)$ such that
\begin{align*}
\|f-f_{\infty}\|_{W^{k,p}}+\|g-g_{\infty}\|_{W^{k,p}} \leq \delta.
\end{align*}
By Sobolev embedding theorem, we have
\begin{align*}
\|f_{\infty}-g_{\infty}\|_{W^{1,n}}
&\leq \|f-f_{\infty}\|_{W^{1,n}}+ \|g-g_{\infty}\|_{W^{1,n}}+ \|f-g\|_{W^{1,n}} \\
&\leq C \big( \|f-f_{\infty}\|_{W^{k,p}}+ \|g-g_{\infty}\|_{W^{k,p}} \big)
+ \|f-g\|_{W^{1,n}} \\
&\leq C \delta + \varepsilon_0.
\end{align*}
Then, by choosing suitable $\delta_0>0$ and $\varepsilon_0>0$ such that $C\delta_0+\varepsilon_0 \leq \eta_0$ where $\eta_0$ comes from Theorem \ref{thm:lip_homotopic}, we have that $f_\infty$ and $g_\infty$ are homotopic provided $\delta\leq \delta_0$ . Hence, $f$ and $g$ are homotopic by definition and the proof is completed.
\end{proof}

\section{Proof of Theorem \ref{thm:main}}\label{sec:proof}
Throughout the proof of Theorem \ref{thm:main}, we assume that the dimension of $M$ is $2m$. Hence, $W^{m,2}(M,N)$ is a critical Sobolev space.

Fix a free homotopy class $\alpha \in [M,N]$,  and denote
\begin{align*}
\mathcal{E}(m,\alpha)=\inf \{E_m(u) \,|\, u \in  \alpha \cap W^{m,2}(M,N)\}.
\end{align*}
Without loss of generality, we can set $\alpha=[f_\alpha]$ with $f_\alpha \in C^\infty(M,N)$ and $f_\alpha \in \alpha$. By (\ref{eqn:Em_equiv}), we know that
\begin{align}\label{eqn:mimi_value}
\mathcal{E}(m,\alpha)
& =\inf \{E_m(u) \,|\, u \in W^{m,2}(M,N), \, u \sim f_\alpha \} \notag \\
& =\inf \{E_m(u) \,|\, u \in C^\infty (M,N), \, u \sim f_\alpha \}.
\end{align}

Let $u_k \in C^\infty(M,N)$ be a minimizing sequence in the homotopy class $\alpha$, that is, $u_k \in \alpha$ and
\begin{align*}
\mathcal{E}(m,\alpha)= \lim_{k\rightarrow \infty} E_m(u_k).
\end{align*}
By Gagliardo-Nirenberg interpolation inequality and integration by parts, we have
\begin{align*}
\| u_k \|_{W^{m,2}} \leq C \big( E_m^{1/2}(u_k) + \| u_k\|_{L^\infty}\big) \leq C<\infty\,,
\end{align*}
where $C$ is a positive constant independent of $u_k$. Hence, up to a subsequence, we can assume that $u_k$ converges to $u^*$ weakly in $W^{m,2}$ and strongly in $W^{m-1,2}$, where $u^* \in W^{m,2}(M,N)$, that is,
\begin{align*}
u_k &\rightharpoonup u^*, \quad \mbox{in}\,\, W^{m,2}, \\
u_k &\rightarrow u^*, \quad \mbox{in}\,\, W^{m-1,2}.
\end{align*}

We will divide the proof of Theorem \ref{thm:main} into two parts. The first part is to establish the $\epsilon$-regularity for the minimizing sequence $\{u_k\}$. In this part our method is motivated by \cite{He18}.
The second part is to prove that $u^*$ is a smooth extrinsic $m$-polyharmonic map and that
 $u^*$ minimizes the functional $E_m(u)$ in homotopy class $\alpha$ under the condition $\pi_{2m}(N)=\{ 0\}$.

\subsection{\texorpdfstring{$\epsilon$}{epsilon}-regularity for the minimizing sequence \texorpdfstring{$\{u_k \}$}{uk}} \label{subsec:regularity}
Let us start by introducing the following measures which are totally bounded,
\begin{equation}\label{eqn:measure1}
\begin{array}{l}
\displaystyle \mu_k = \bigg(  \sum_{i=1}^m |\nabla^i u_k|^{\frac{2m}{i}}  \bigg) dM, \\
\displaystyle\mu^* = \bigg(  \sum_{i=1}^m |\nabla^i u^*|^{\frac{2m}{i}}  \bigg) dM, \\
\displaystyle\xi_k = \, \big|\Delta^{\frac{m}{2}} u_k \big|^2 dM, \\
\displaystyle\xi^* = \, \big|\Delta^{\frac{m}{2}} u^* \big|^2 dM.
\end{array}
\end{equation}
We may assume, by passing to a subsequence if necessary, $\mu_k$ weakly converges to a Radon measure $\mu$, and $\xi_k$ weakly converges to a Radon measure $\xi$. By Fatou's lemma, there exist Radon measures $\nu$ and $\eta$, called \emph{defect measures} (see for example \cite{Lin99}), such that
\begin{align*}
\mu=\nu+\mu^*,\quad \xi=\eta +\xi^*.
\end{align*}
Though defect measures $\nu$ and $\eta$ are different, they both measure the failure of strong convergence in $W^{m,2}$. Indeed, the following properties hold when $\nu$ or $\eta$ vanishes.
\begin{lemma}\label{lem:equi_defect}
The following three statements are equivalent:
\begin{enumerate}
\item  $u_k$ converges to $u^*$ strongly in $W^{m,2}(M,N)$.
\item $\nu \equiv 0$ in $M$.
\item $\eta \equiv 0$ in $M$.
\end{enumerate}
\end{lemma}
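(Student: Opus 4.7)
The plan is to handle (1) $\Leftrightarrow$ (3) and (1) $\Leftrightarrow$ (2) by separate arguments. The $\xi$-equivalence is cleanest because $\xi$ carries the single density $|\Delta^{m/2} u|^2$ and reduces to a Hilbert-space argument combined with elliptic coercivity of the polylaplacian; the $\mu$-equivalence additionally needs Gagliardo--Nirenberg interpolation, because $\mu$ aggregates contributions from derivatives of every order $1 \leq i \leq m$ at the critical exponent $2m/i$.

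For (1) $\Rightarrow$ (3), strong $W^{m,2}$ convergence trivially forces $\Delta^{m/2} u_k \to \Delta^{m/2} u^*$ in $L^2$, so $\xi_k \to \xi^*$ in total variation and $\eta \equiv 0$. Conversely, assume $\eta \equiv 0$. Testing the weak convergence $\xi_k \to \xi^*$ against the continuous function $1$ on the closed manifold $M$ gives $\|\Delta^{m/2} u_k\|_{L^2}^2 \to \|\Delta^{m/2} u^*\|_{L^2}^2$; together with the weak $L^2$ convergence $\Delta^{m/2} u_k \rightharpoonup \Delta^{m/2} u^*$ coming from the weak $W^{m,2}$ convergence, the standard Hilbert-space fact that norm$+$weak convergence implies strong convergence yields $\Delta^{m/2} u_k \to \Delta^{m/2} u^*$ in $L^2$. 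Invoking the elliptic estimate
\begin{equation*}
\|\nabla^m v\|_{L^2}^2 \leq C \bigl( \|\Delta^{m/2} v\|_{L^2}^2 + \|v\|_{W^{m-1,2}}^2 \bigr)
\end{equation*}
on the closed manifold $M$ and applying it to $v = u_k - u^*$, the assumed strong $W^{m-1,2}$ convergence promotes this to strong $W^{m,2}$ convergence.

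For (1) $\Rightarrow$ (2), the Gagliardo--Nirenberg inequality
\begin{equation*}
\|\nabla^i v\|_{L^{2m/i}} \leq C \|\nabla^m v\|_{L^2}^{i/m} \|v\|_{L^\infty}^{1-i/m}, \qquad 1 \leq i \leq m,
\end{equation*}
applied to $v = u_k - u^*$ (with the $L^\infty$ factor bounded by twice the diameter of $N$), shows that $\nabla^i u_k \to \nabla^i u^*$ in $L^{2m/i}$ for every $i$, hence $\mu_k \to \mu^*$ in total variation and $\nu \equiv 0$. For (2) $\Rightarrow$ (1), testing $\mu_k \to \mu^*$ against $1$ gives
\begin{equation*}
\sum_{i=1}^m \int_M |\nabla^i u_k|^{2m/i} \, dM \longrightarrow \sum_{i=1}^m \int_M |\nabla^i u^*|^{2m/i} \, dM.
\end{equation*}
The uniform bound on $\{u_k\}$ in $W^{m,2} \cap L^\infty$ and the same Gagliardo--Nirenberg estimate give uniform $L^{2m/i}$ bounds on $\nabla^i u_k$; combined with strong $W^{m-1,2}$ convergence (for $i<m$) and weak $W^{m,2}$ convergence (for $i=m$), this forces $\nabla^i u_k \rightharpoonup \nabla^i u^*$ weakly in $L^{2m/i}$. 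Lower semicontinuity of each $L^{2m/i}$-norm paired with the equality of total masses forces term-by-term convergence, so in particular $\|\nabla^m u_k\|_{L^2} \to \|\nabla^m u^*\|_{L^2}$; a final weak$+$norm $\Rightarrow$ strong argument in $L^2$ finishes (1).

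The main obstacle is the term-by-term separation in (2) $\Rightarrow$ (1): weak convergence of $\mu_k$ only preserves the total mass, and one must invoke lower semicontinuity of each $L^{2m/i}$ summand individually to isolate the top-order term $\|\nabla^m u_k\|_{L^2}^2$, which is the only piece that actually upgrades to $W^{m,2}$-strong convergence. A secondary technical point is the coercivity of $\|\Delta^{m/2}\cdot\|_{L^2}$ modulo lower-order terms used in (3) $\Rightarrow$ (1); this is a standard elliptic fact on closed manifolds but must be applied uniformly in $m$.
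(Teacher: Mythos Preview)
Your proof is correct and follows the same line of reasoning the paper sketches: the equivalence with $\eta$ comes down to weak-plus-norm convergence in $L^2$ together with the integration-by-parts identity $\int_M |\Delta^{m/2} v|^2 = \int_M |\nabla^m v|^2 + \text{LOT}$ (which you package as an elliptic coercivity estimate), and the equivalence with $\nu$ is handled by Gagliardo--Nirenberg plus the term-by-term lower-semicontinuity argument. The paper's own proof is deliberately terse---it records only the two observations (strong $W^{m-1,2}$ convergence and the Bochner-type identity) and omits details---so your write-up is effectively a fleshed-out version of the same argument rather than a different one.
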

\begin{proof}
Since the proof is based on the following two simple observations, we omit further details. One is that $u_k$ weakly converges to $u^*$ in $W^{m,2}$ and strongly in $W^{m-1,2}$. The other is that for any $v \in W^{m,2}(M,N)$, by integration by parts, there holds
\begin{align*}
\int_M \big|\Delta^{\frac{m}{2}} v \big|^2 dM
= \int_M \big|\nabla^m v \big|^2 dM + \mbox{LOT},
\end{align*}
where LOT is a linear combination of the following terms
\begin{align*}
\int_M \nabla^i v \ast \nabla^j  v \,dM,
\quad \mbox{with} \,\, 1\leq i, j\leq m-1 \text{ and } i+j \leq 2m-2.
\end{align*}
Here $\ast$ denotes the contraction by the metric $g$ on $M$. For example, when $m=2$, there holds
\begin{equation}\label{contraction}
\int_M \big|\Delta v \big|^2 dM
= \int_M \big|\nabla^2 v \big|^2 dM + \int_M Ric(\nabla v, \nabla v)\, dM,
\end{equation}
where $Ric (\cdot,\cdot)$ denotes the Ricci curvature tensor on $(M,g)$, and we denote the second term in the right-hand side of \eqref{contraction} briefly by the contraction
\begin{align*}
\int_M \nabla v \ast \nabla v \, dM. \tag*{\qedhere}
\end{align*}
\end{proof}

\begin{remark}\label{rem:measures}
If we replace $M$ by some open subset $\Omega \subset M$ in above lemma, we have similar results with little modifications. That is, if $\eta \equiv 0$ in $\Omega$, then for any open subset $U \subset \subset \Omega$, there holds $\nu \equiv 0$ in $U$.
\end{remark}

\medskip
Before we proceed to prove the $\epsilon$-regularity for the minimizing sequence $\{ u_k\}$, let us recall the scaling invariance of the functionals we concern, such as
\begin{align*}
E_m(v,g)=\int_M | \Delta^{\frac{m}{2}} v |^2 dM_g
\quad \mbox{and}\quad
F(v,g)=\int_M \bigg(  \sum_{i=1}^m |\nabla^i v|^{\frac{2m}{i}}  \bigg) dM_g
\end{align*}
where $v \in W^{m,2}(M,N)$ with $dim M=2m$. That is, if $\lambda$ is a positive number and we do the scaling $g_{\lambda}:=\lambda^2 g$, then we have
\begin{align}\label{eqn:scal_inv}
E_m(v,g)=E_m(v,g_\lambda)
\quad \mbox{and} \quad
F(v,g)=F(v,g_\lambda).
\end{align}
Let $\tau_0$ be the injectivity radius of $(M,g)$. Then the injectivity radius of $(M,g_\lambda)$ is $\lambda \tau_0$. Hence, without loss of generality we can assume that, by choosing $\lambda$ large enough and replacing $(M,g)$ by $(M,g_\lambda)$ if necessary, the injectivity radius of $(M,g)$ is bigger than 2, i.e., $\tau_0>2$. For any $p\in M$ and $r \in (0,\tau_0)$, $B_r(p)$ denotes the geodesic ball of radius $r$ with center $p$. And we always choose the geodesic normal coordinates $(B_r,\{ x^i\})$ on geodesic ball $B_{r}(p)$, where $B_r$ denotes the ball of radius $r$ with center origin in Euclidean space $\R^{2m}$\,.

\begin{lemma}\label{lem:eps_regularity}
There exists a positive constant $\epsilon_0$ such that if
\begin{align*}
\mu (B_2(p))\leq \epsilon_0,
\end{align*}
then $\nu \equiv 0$ in $B_1(p)$.
\end{lemma}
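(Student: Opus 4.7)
The strategy is a comparison argument exploiting the fact that $\{u_k\}$ is energy-minimizing in the fixed homotopy class $\alpha$. The plan is, under the smallness hypothesis $\mu(B_2(p)) \leq \epsilon_0$, to construct for each large $k$ a competitor $v_k \in W^{m,2}(M,N)\cap \alpha$ that coincides with $u_k$ outside some radius $r\in(1,2)$ and essentially replaces $u_k$ by $u^*$ inside. Minimality will then force $\limsup_k E_m(u_k; B_r)$ to be no larger than $E_m(u^*; B_r)$, and combined with weak lower semicontinuity this yields strong $W^{m,2}$-convergence of $u_k$ to $u^*$ on a ball containing $B_1(p)$. By Lemma \ref{lem:equi_defect} and Remark \ref{rem:measures}, this is equivalent to $\nu\equiv 0$ on $B_1(p)$.

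First I would choose a good radius via a Fubini/slicing argument. Since $\mu_k \rightharpoonup \mu$ weakly with $\mu(B_2(p))\le \epsilon_0$, and since the radial integral of the integrands of $\mu_k$ over $r\in(1,2)$ is controlled by $\mu_k(B_2)$, one can pass to a subsequence and select $r\in(1,2)$ with $\mu^*(\partial B_r(p))=0$ and uniform smallness of the relevant spherical $(m-1)$-jets of $u_k$ on $\partial B_r(p)$. Since $u_k\to u^*$ strongly in $W^{m-1,2}(M)$, the trace theorem then gives strong convergence of the boundary jets of $u_k$ to those of $u^*$ on $\partial B_r(p)$ in the appropriate fractional Sobolev spaces, which is what enables the gluing.

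The heart of the argument, and the main obstacle, is the construction of $v_k$ inside $B_r(p)$ as an $N$-valued $W^{m,2}$ extension. Following the scheme in \cite{He18} for $W^{2,2}$ almost complex structures, the idea is first to produce an $\R^K$-valued map $\widetilde v_k$ on $B_r(p)$ that equals $u^*$ on an inner ball $B_{r-\delta}(p)$ and interpolates through a thin annulus to the boundary jets of $u_k$ on $\partial B_r(p)$, then to project $\widetilde v_k$ back to $N$ via the nearest-point retraction. Making this projection well-defined and $W^{m,2}$-controlled in the critical dimension $2m$ is delicate, since $W^{m,2}$ does not embed into $L^\infty$; one needs smallness of $\mu$ together with Morrey/Poincar\'e-type estimates for small-energy maps to keep $\widetilde v_k$ in a tubular neighborhood of $N$ and to control the composition under nearest-point projection. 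Establishing this extension theorem for $W^{m,2}(M,N)$ maps in critical dimension is precisely the technical novelty flagged in the introduction, and I expect this to be the hardest step.

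Granting the extension, the conclusion is routine. One verifies that $\|v_k-u_k\|_{W^{1,2m}}$ can be made as small as desired so that Corollary \ref{coro:W1n_homotopic} gives $v_k\sim u_k$, hence $v_k\in\alpha$. Minimality then yields $E_m(u_k)\le E_m(v_k)+o_k(1)$; subtracting the identical contributions on $M\setminus B_r(p)$ gives $E_m(u_k; B_r)\le E_m(v_k; B_r)+o_k(1)$. The construction is arranged so that $\limsup_k E_m(v_k; B_r)\le E_m(u^*; B_{r-\delta})+C\,E_m(u^*; B_r\setminus B_{r-\delta})$, and letting $\delta\to 0$ (using $\mu^*(\partial B_r)=0$) along a diagonal subsequence gives $\limsup_k E_m(u_k; B_r)\le E_m(u^*; B_r)$. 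Combined with the weak lower semicontinuity $E_m(u^*; B_{r'})\le \liminf_k E_m(u_k; B_{r'})$ for $r'<r$, this forces $\lim_k E_m(u_k; B_{r'})=E_m(u^*; B_{r'})$, hence strong $W^{m,2}$-convergence on $B_{r'}\supset B_1(p)$. Therefore $\eta\equiv 0$ on $B_1(p)$, and Remark \ref{rem:measures} converts this into $\nu\equiv 0$ on $B_1(p)$.
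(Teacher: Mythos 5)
Your outline captures the right overall strategy and correctly matches the paper's skeleton: build a competitor agreeing with $u_k$ outside, with $u^*$ inside, use minimality in $\alpha$, and convert the resulting energy comparison into vanishing of the defect measure. But in the two places where the argument actually has teeth, the proposal either differs substantially from what the paper does or leaves a genuine gap.

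The construction of the competitor is the heart of the matter, and the paper does not proceed via Whitney/jet interpolation across a thin annulus. Instead it takes the straightforward cutoff interpolation $u_{k,j}= \psi_j u_k + (1-\psi_j) u^*$ on the annulus $B_1\setminus B_{1-j^{-1}}$ and then mollifies it with a \emph{variable-radius} kernel $\phi_{\rho(x)}$, with $\rho$ vanishing at both edges of the annulus and reaching size $\sim j^{-m}$ in the middle. The Poincar\'e inequality applied to the mollified averages $u^*_{\rho}$, $u_{k,\rho}$ on balls of radius $\rho(x)$ then shows (using $\mu(B_2)\le\epsilon_0$) that the mollified map lies within $\sigma_0/2$ of $N$ everywhere, after which the nearest-point projection completes the construction. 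This variable-radius mollification (a Schoen--Uhlenbeck device, already adapted in \cite{He18}) is precisely what makes the ``stay in a tubular neighborhood of $N$'' issue tractable in critical dimension. You correctly flag this as the hardest step, but you do not supply a mechanism for it; saying ``Morrey/Poincar\'e-type estimates for small-energy maps'' and deferring to the references is not a proof of the point, and it is not obvious that a jet interpolation across a thin annulus (matching the boundary $(m-1)$-jet of $u_k$ to $u^*$) can be kept uniformly close to $N$, because the interpolating map is not a local average of $N$-valued maps.

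There is also a gap in the final limit passage. You invoke a slicing argument to pick $r$ with $\mu^*(\partial B_r(p))=0$, but this is automatic since $\mu^*$ is absolutely continuous; the condition you actually need is control of $\nu(\partial B_r(p))$ (equivalently, that the defect measure does not concentrate on the interpolation shell). The estimate for the annulus contribution in the paper yields $\eta(B_r(p))\le C\,\nu(\partial B_r(p))$ for $r$ in a range, and the argument concludes only after choosing a radius $r_0$ with $\nu(\partial B_{r_0}(p))=0$ (possible since $\nu(M)<\infty$), then invoking Remark \ref{rem:measures}. Your claim that $\limsup_k E_m(v_k;B_r)\le E_m(u^*;B_{r-\delta})+C\,E_m(u^*;B_r\setminus B_{r-\delta})$ omits the contribution from $u_k$ (or its jets) in the transition region, and the subsequent diagonal limit $\delta\to 0$ does not obviously kill the possible defect mass on $\partial B_r$. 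Without the $\nu(\partial B_{r_0})=0$ selection this step does not close.
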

\begin{proof}
Since the proof involves very technical and complicated constructions of an "almost energy-minimizing" sequence in the same homotopy class in $W^{m,2}(M,N)$, we will divide the proof into three steps.

We first set up notations which will be used throughout the proof. $\nabla$ denotes the Levi-Civita connection, $D$ denotes the ordinary derivatives in geodesic normal coordinates $(B_2, \{ x^i\})$, and $dM, dx$ stand for the volume element of $(M,g)$ and $\R^{2m}$ respectively. For simplicity of notation, the local coordinate $x=(x^1,\cdots, x^{2m})$ also stands for the point in $M$, if we write $x \in M$ or $x\in B_r (p)$. $C$ always denotes some positive constant depending only on $M$ and $N$.

\medskip
\noindent \textbf{Step One}: For any  fixed sufficiently large integer $j\in \N^+$, there exists a positive integer $\mathcal{L}_j$ such that for any $k \geq \mathcal{L}_j$, we can construct $\widetilde{u}_{k} \in W^{m,2}(M,N)$ such that
\begin{align}\label{eqn:almost_u_k}
\widetilde{u}_{k}(x)=
\begin{cases}
u^*(x), & x \in B_{1-j^{-1}}(p), \\
u_{k}(x), & x \in M \setminus B_1(p).
\end{cases}
\end{align}
Note that the following operations will be done in the local coordinates $(B_2,\{ x^i\})$. Firstly, let $\psi_j \in C^{\infty}(B_2, [0,1])$ be such that
\begin{align}\label{eqn:psi_j}
\psi_j(x) =
\begin{cases}
0, & x \in B_{1-j^{-1}}, \\
1, & x \in B_2 \setminus B_1,
\end{cases}
\quad \mbox{and} \quad
\|D^l \psi_j\|_{L^\infty(B_2)} \leq C j^{l}, \, \forall  \,j\geq 2\,, l\, \in \N.
\end{align}
Let us define
\begin{align}\label{ukj}
u_{k,j}(x)
= u_k(x) + \big(u^*(x)-u_k(x)\big)\big(1-\psi_j(x)\big), \quad x \in B_2.
\end{align}
which yields
\begin{align}
u_{k,j}(x)=
\begin{cases}
u^*(x), & x \in B_{1-j^{-1}}, \\
u_k(x), & x \in B_2 \setminus B_1.
\end{cases}
\end{align}

Secondly, let $\phi(x) \in C_0^\infty(B_1, [0,\infty))$ be a cut-off function on the unit ball $B_1 \subset \R^{2m}$ such that
\begin{align*}
\int_{B_1} \phi(x) dx =1\,.
\end{align*}
Set
\begin{align*}
\phi_\rho (x):= \rho^{-2m} \phi \bigg( \frac{x}{\rho} \bigg).
\end{align*}
Now we define
\begin{align}\label{ukjrho}
u_{k,j,\rho}:=u_{k,j,\rho(x)}(x)=\int_{B_1}\phi(z) u_{k,j}\big(x+ \rho(x) z\big) dz
\end{align}
where $\rho$ is only dependent of $|x|$ and we write $\rho(x)=\rho(|x|)$. Such technique was used in the theory of harmonic maps by Schoen and Uhlenbeck (\cite{SU2}). We can choose $\rho(x)=\rho(|x|) \in C^\infty(\R^{2m}, [0,\infty))$ (see Remark \ref{rem:rho} for an explicit choice of $\rho$) such that
\begin{equation}\label{cond:rho}
\left\{
\begin{aligned}
&\rho(s)>0, \quad \forall s \in (1-j^{-1},1), \\
&\rho(s)=0, \quad \forall s \in (-\infty,1-j^{-1}] \cup [1,\infty),\\
&\rho \big(1-\frac{1}{2}j^{-1} \big)=\overline{\rho}
:=\max \rho=c_1 j^{-m} \,\text{ for some universal constant } c_1>0 , \\
&\| \rho(s)\|_{C^{m}(\R)}\leq C.
\end{aligned}
\right.
\end{equation}
Then we have
\begin{align}\label{eqn:u_kjrho}
u_{k,j,\rho(x)}(x)=
\begin{cases}
u^*(x), & x \in B_{1-j^{-1}}, \\
u_k(x), & x \in B_2 \setminus B_1.
\end{cases}
\end{align}

Since $N$ is a closed Riemannian manifold isometrically embedded in Euclidean space $\R^K$, there exists a tubular neighbourhood $N_{\sigma_0}:=\{ p \in \R^K \,|\, \mbox{dist}(p, N) < \sigma_0 \}$ of $N$ such that the nearest point projection map $\pi: N_{\sigma_0} \rightarrow \R^K$ is well defined and smooth. We claim that for any fixed sufficiently large integer $j\in \N^+$ there exists a positive integer $\mathcal{L}_j$ such that for any $k \geq \mathcal{L}_j$, there holds
\begin{align*}
\mbox{dist}(u_{k,j,\rho(x)}(x), N) \leq \frac{1}{2}\sigma_0, \quad a.e. \, x \in B_2.
\end{align*}
The claim will be proved in {\bf Step Two}.
It follows from (\ref{eqn:u_kjrho}) that for any $k \geq \mathcal{L}_{j}$, there holds
\begin{align*}
\pi \big( u_{k,j,\rho(x)}(x)\big)=
\begin{cases}
u^*(x), & x \in B_{1-j^{-1}}, \\
u_k(x), & x \in B_2 \setminus B_1\,,
\end{cases}
\end{align*}
which implies that
\begin{align}\label{ukxequal}
\widetilde{u}_{k}(x) :=
\begin{cases}
\pi \big(u_{k,j,\rho(x)}(x)\big), & x \in B_2(p), \\
u_k(x), & x \in M \setminus B_2(p).
\end{cases}
\end{align}
is well defined and satisfies (\ref{eqn:almost_u_k}). It is a simple matter to check that $\widetilde{u}_k \in W^{m,2}(M,N)$.

\medskip
\noindent \textbf{Step Two}: In this step, we will prove the claim mentioned above. That is, for any fixed sufficiently large integer $j\in \N^+$, there exists a positive integer $L_j$ such that for any $k \geq L_j$, there holds
\begin{align}\label{ineqy:distN}
\mbox{dist}(u_{k,j,\rho(x)}(x), N) \leq \frac{1}{2}\sigma_0, \quad a.e.\, x \in B_2.
\end{align}

By (\ref{eqn:u_kjrho}), we know that $u_{k,j,\rho(x)}(x) \in N$, a.e. $|x| \in [0,1-j^{-1}] \cup [1,2)$. Hence, it suffices to show that \eqref{ineqy:distN} holds for a.e. $x$ such that $|x| \in (1-j^{-1}, 1)$. Fix $\mu_1>0$ sufficiently small which will be determined later. According to Remark \ref{rem:rho}, we can further assume that
\begin{equation}\label{cond:rho2}
\left\{
\begin{aligned}
&\rho(s) \leq c_2 \mu_1 \overline{\rho},
\quad \forall s \in  (1-j^{-1}, 1-j^{-1}+\mu_1 j^{-1}) \cup (1-\mu_1 j^{-1},1) \\
&\rho(s) \geq c_2 \mu_1 \overline{\rho},
\quad \forall s \in [1-j^{-1}+\mu_1 j^{-1}, 1-\mu_1 j^{-1}].
\end{aligned}
\right.
\end{equation}
where $c_2>0$ is a universal constant.

\medskip Denote
\begin{align*}
u^*_{\rho(x)}(x):= \int_{B_1} \phi(z) u^*(x+\rho(x)z) dz
= \int_{B_{\rho(x)}(x)} u^*(y) \phi_{\rho(x)}(y-x) dy
\end{align*}
where $\rho(x)>0$ as $|x| \in (1-j^{-1}, 1)$. Then by  a version of Poincar\'e inequality (see e.g. \cite[Section 8.11]{LL01}), for $|x| \in (1-j^{-1}, 1)$ we have
\begin{align}\label{ineqy:temp1}
\rho(x)^{-2m} \int_{B_{\rho(x)}(x)} \big| u^*(y)-u^*_{\rho(x)}(x) \big| dy
&\leq C \bigg( \int_{B_{\rho(x)}(x)} | D u^*(y)|^{2m} dy \bigg)^{\frac{1}{2m}} \notag \\
& \leq C \bigg( \int_{B_{\frac{3}{2}}} | D u^*(y)|^{2m} dy \bigg)^{\frac{1}{2m}} \notag \\
& \leq C \bigg( \int_{B_{\frac{3}{2}}(p)} | \nabla u^*(y)|^{2m} dM \bigg)^{\frac{1}{2m}} \notag \\
& \leq C \epsilon_0^{\frac{1}{2m}}\,,
\end{align}
where we used the fact $\mu(B_2) \leq \epsilon_0$ in the last inequality. It follows from (\ref{ineqy:temp1}) that for $|x| \in (1-j^{-1}, 1)$, there exists $y \in B_{\rho(x)}(x)$ (depending on $x$) such that
\begin{align}\label{ineqy:temp2}
u^*(y) \in N
\quad \mbox{and}\quad
|u^*(y)-u^*_{\rho(x)}(x)| \leq C \epsilon_0^{\frac{1}{2m}}.
\end{align}

\medskip
Let us focus on the interval $ |x| \in (1-j^{-1}, 1-j^{-1}+\mu_1 j^{-1})$ first. we compute
\begin{align*}
\big| u_{k,j,\rho(x)}(x)-u^*_{\rho(x)}(x) \big|
&= \bigg| \int_{B_1} \phi(z) \big( u_{k,j}-u^*\big)(x+\rho(x)z)  dz \bigg| \notag \\
&= \bigg| \int_{B_1} \phi(z) \big( (u_k-u^*)\psi_j \big)(x+\rho(x)z)  dz \bigg| \notag \\
& \leq C \max_{\substack{|x|\in (1-j^{-1},1-j^{-1}+\mu_1 j^{-1})\\ |z| \leq 1}}
\big\{\psi_j (x+\rho(x)z) \big\} \\
& \leq C \|D\psi_j\|_{L^\infty} \max \bigg| |x+\rho(x)z|- (1-j^{-1})\bigg|
\end{align*}
where in the first inequality we used the fact $\|u_k\|_{L^\infty(M)}+\|u^*\|_{L^\infty(M)}\leq C$ due to the fact that $u_k(x), u^*(x) \in N$ a.e., $x \in M$.
By \eqref{cond:rho2}, for $ |x| \in (1-j^{-1}, 1-j^{-1}+\mu_1 j^{-1})$, there holds
\begin{align*}
1-j^{-1}-2 \mu_1 \overline{\rho}
\leq |x+ \rho(x)z|
\leq 1-j^{-1}+\mu_1 j^{-1}+ 2\mu_1 \overline{\rho}.
\end{align*}
Thus, we obtain
\begin{align}\label{ineqy:temp3}
\big| u_{k,j,\rho(x)}(x)-u^*_{\rho(x)}(x) \big|
\leq C j (\mu_1 j^{-1}+ \mu_1 \overline{\rho})
\leq C \mu_1\,,
\end{align}
where we used (\ref{eqn:psi_j}) and (\ref{cond:rho}).
Combining (\ref{ineqy:temp2}) and (\ref{ineqy:temp3}), we have
\begin{align}\label{ineqy:distI}
\mbox{dist}(u_{k,j,\rho(x)}(x), N)
&\leq C \big| u_{k,j,\rho(x)}(x)-u^*(y) \big| \notag \\
&\leq C \big| u_{k,j,\rho(x)}(x)-u^*_{\rho(x)}(x) \big|+ C|u^*(y)-u^*_{\rho(x)}(x)| \notag \\
&\leq C \big( \epsilon_0^{\frac{1}{2m}} + \mu_1 \big)
\end{align}
for $|x| \in (1-j^{-1}, 1-j^{-1}+\mu_1 j^{-1})$.

\medskip
Now we consider the case $ |x| \in (1-\mu_1j^{-1}, 1)$.
Since $\mu_k \rightharpoonup \mu$ as Radon measures and $\mu(B_2) \leq \epsilon_0$, there exists a positive integer $\mathcal{L}_1$ such that for any $k \geq \mathcal{L}_1$ there holds
\begin{align}\label{ineqy:mu_k}
\mu_k \big(\overline{B}_{\frac{3}{2}} \big)
\leq \mu\big(\overline{B}_{\frac{3}{2}} \big) + \epsilon_0
\leq 2 \epsilon_0\,.
\end{align}
By a similar argument as above, replacing $u^*_{\rho(x)}(x)$ by
\begin{align*}
u_{k,\rho(x)}(x):=\int_{B_1} \phi(z) u_k(x+\rho(x)z) dz\,,
\end{align*}
we have that for any $k \geq \mathcal{L}_1$, (\ref{ineqy:distI}) holds for $|x| \in (1-\mu_1j^{-1}, 1)$\,.

\medskip
Finally, we deal with the case $ |x| \in [1-j^{-1}+\mu_1j^{-1},1-\mu_1j^{-1}]$. We compute
\begin{align*}
\big| u_{k,j,\rho(x)}(x)-u^*_{\rho(x)}(x) \big|
&= \bigg| \int_{B_1} \phi(z) \big( u_{k,j}-u^*\big)(x+\rho(x)z)  dz \bigg| \notag \\
&= \bigg| \int_{B_1} \phi(z) \big( (u_k-u^*)\psi_j \big)(x+\rho(x)z)  dz \bigg| \notag \\
& \leq C \rho(x)^{-2m} \int_{B_{\rho(x)}(x)} \big| u_k(y)-u^*(y) \big| dy \\
& \leq C \rho(x)^{-m}
\bigg( \int_{B_{\rho(x)}(x)} \big| u_k(y)-u^*(y) \big|^2 dy \bigg)^{\frac{1}{2}} \\
& \leq C (\mu_1 \overline{\rho})^{-m}
\bigg( \int_{B_{\frac{3}{2}}} \big| u_k(y)-u^*(y) \big|^2 dy \bigg)^{\frac{1}{2}} \\
& \leq C (\mu_1 \overline{\rho})^{-m}
\bigg( \int_{B_{\frac{3}{2}}(p)} \big| u_k(y)-u^*(y) \big|^2 dM \bigg)^{\frac{1}{2}}.
\end{align*}
Since $u_k$ converges to $u^*$ strongly in $W^{m-1,2}$, there exists a positive integer $\mathcal{L}_j>\mathcal{L}_1$ such that for any $k \geq \mathcal{L}_j$ we have
\begin{align}
\mu_1^{-m} j^{m^2} \|u_k-u^* \|_{L^2(M)}\leq j^{-1},
\end{align}
which implies that
\begin{align}\label{ineqy:temp4}
\big| u_{k,j,\rho(x)}(x)-u^*_{\rho(x)}(x) \big| \leq C j^{-1}.
\end{align}
Combining (\ref{ineqy:temp2}) and (\ref{ineqy:temp4}), we have that there exists a positive integer $\mathcal{L}_j$ such that for any $k \geq \mathcal{L}_j$ there holds
\begin{align}\label{ineqy:distIII}
\mbox{dist}(u_{k,j,\rho(x)}(x), N)
&\leq C \big| u_{k,j,\rho(x)}(x)-u^*(y) \big| \notag \\
&\leq C \big| u_{k,j,\rho(x)}(x)-u^*_{\rho(x)}(x) \big|+ C|u^*(y)-u^*_{\rho(x)}(x)| \notag \\
&\leq C \big( \epsilon_0^{\frac{1}{2m}} + j^{-1} \big)
\end{align}
for $ |x| \in [1-j^{-1}+\mu_1j^{-1},1-\mu_1j^{-1}]$.

In summary, by choosing $\mu_1=\epsilon_0^{\frac{1}{2m}}$ small enough such that $C\epsilon_0^{\frac{1}{2m}} \leq \frac{1}{12}\sigma_0$ , there exists a positive integer $\mathcal{L}_j$ (here without loss of generality, we can assume $j$ is large enough such that $C j^{-1}\leq \frac{1}{12}\sigma_0$)  such that (\ref{ineqy:distN}) holds for any $k \geq \mathcal{L}_j$. The claim is proved. Note that $\mu_1$ and $\epsilon_0$ are so chosen only dependent of the closed Riemannian manifolds $M,N$.

\medskip
\noindent \textbf{Step Three}: we will show that for any fixed sufficiently large integer $j\in \N^+$, there exists a positive integer $\mathcal{L}_j$ such that for any $k \geq \mathcal{L}_j$, the $\widetilde{u}_k$ obtained in {\bf Step One} is homotopic to $u_k$ in $W^{m,2}(M,N)$.
By Corollary \ref{coro:W1n_homotopic}, it is sufficient to show that
\begin{align}\label{ineqy:S3_temp0}
\|\widetilde{u}_k -u_k \|_{W^{1,2m}(M)} \leq \varepsilon_0\,,
\end{align}
where $\varepsilon_0$ comes from Corollary \ref{coro:W1n_homotopic}. By the construction of $\widetilde{u}_k$, we know that
\begin{align*}
\|\widetilde{u}_k -u_k \|_{W^{1,2m}(M)}
=\|\widetilde{u}_k -u_k \|_{W^{1,2m}(B_1(p))}
\leq C \|\widetilde{u}_k -u_k \|_{W^{1,2m}(B_1)}\,,
\end{align*}
where $W^{1,2m}(B_1)$ stands for the Sobolev space defined on unit ball $B_1$ in Euclidean space $\R^{2m}$.
We first compute
\begin{align}\label{ineqy:S3_temp1}
\|\widetilde{u}_k -u_k \|_{L^{2m}(B_1)}
& = \|\widetilde{u}_k -u_k \|_{L^{2m}(B_{1-j^{-1}})}
+ \|\widetilde{u}_k -u_k \|_{L^{2m}(B_1 \setminus B_{1-j^{-1}})} \notag \\
& \leq \|u^* -u_k \|_{L^{2m}(B_{1-j^{-1}})}
+ C\cdot \mbox{Vol} (B_1 \setminus B_{1-j^{-1}}) \notag \\
& \leq \|u^* -u_k \|_{L^{2m}(B_{1})}
+ C\cdot \mbox{Vol} (B_1 \setminus B_{1-j^{-1}}).
\end{align}
Since by \eqref{ukxequal} we have
\begin{align*}
\|D \widetilde{u}_k \|_{L^{2m}(B_1)}
&\leq C \|D u_{k,j,\rho(x)}(x) \|_{L^{2m}(B_1)} \notag \\
& \leq C \bigg\|\int_{B_1} \phi(z) D_x \big( u_{k,j}(x+\rho(x)z) \big) dz \bigg \|_{L^{2m}(B_1)} \notag \\
& \leq C \bigg\|\int_{B_1} \big|Du_{k,j}(x+\rho(x)z)\big| (1+ |\rho'| ) dz \bigg \|_{L^{2m}(B_1)} \notag \\
& \leq C \| Du_{k,j}\|_{L^{2m}(B_{\frac{3}{2}})} \notag \\
& \leq C \bigg( \| Du_k\|_{L^{2m}(B_{\frac{3}{2}})}
+ \| Du^*\|_{L^{2m}(B_{\frac{3}{2}})}
+ j \| u_k - u^*\|_{L^{2m}(B_{\frac{3}{2}})} \bigg),
\end{align*}
we obtain
\begin{align} \label{ineqy:S3_temp2}
\|D \widetilde{u}_k -Du_k\|_{L^{2m}(B_1)}
& \leq C \bigg( \| Du_k\|_{L^{2m}(B_{\frac{3}{2}})}
+ \| Du^*\|_{L^{2m}(B_{\frac{3}{2}})}
+ j \| u_k - u^*\|_{L^{2m}(B_{\frac{3}{2}})} \bigg) \notag \\
& \leq C \bigg(\epsilon_0^{\frac{1}{2m}} + j \| u_k - u^*\|_{L^{2m}(B_{\frac{3}{2}})} \bigg)
\end{align}
where we used (\ref{ineqy:mu_k}) in the last inequality.
Combining (\ref{ineqy:S3_temp1}) and (\ref{ineqy:S3_temp2}), we have
\begin{align*}
\|\widetilde{u}_k -u_k \|_{W^{1,2m}(M)}
\leq C \bigg(\mbox{vol} (B_1 \setminus B_{1-j^{-1}})
+\epsilon_0^{\frac{1}{2m}}
+ j \| u_k - u^*\|_{L^{2m}(B_{\frac{3}{2}})} \bigg)\,.
\end{align*}
Since $u_k$ converges to $u^*$ strongly in $W^{m-1,2}$, we choose $\epsilon_0$ small enough, $j$ large enough and then $\mathcal{L}_j$ sufficiently large such that (\ref{ineqy:S3_temp0}) holds for $k \geq \mathcal{L}_j$.

\medskip
\noindent \textbf{Step Four}: we are now in a position to prove $\nu \equiv 0$ in $B_1(p)$.

For any sufficiently large $j \in \N^+$, we choose $k_j \geq \mathcal{L}_j$ which will be determined later. Without loss of generality, we can assume $k_j$ strictly increases as $j$ goes to infinity. According to the conclusion in {\bf Step Three}, we know that $\widetilde{u}_{k_j}$ are homotopic to $u_{k_j}$ in $W^{m,2}(M,N)$.

Since $u_k$ is a minimizing sequence in homotopy class $\alpha$, it follows from (\ref{eqn:mimi_value}) that for any $\epsilon >0$, there exists $j_0>0$ such that for $j>j_0$, there holds
\begin{align*}
E_m(u_{k_j})
\leq \mathcal{E}(m,\alpha) + \epsilon
\leq E_m(\widetilde{u}_{k_j}) + \epsilon.
\end{align*}
By the construction of $\widetilde{u}_{k_j}$, we have
\begin{align*}
\int_{B_1(p)} |\Delta^{\frac{m}{2}} u_{k_j}|^2 dM
&\leq \int_{B_1(p)} |\Delta^{\frac{m}{2}} \widetilde{u}_{k_j}|^2 dM
+ \epsilon \\
&\leq \int_{B_{1-j^{-1}}(p)} |\Delta^{\frac{m}{2}} u^*|^2 dM
+\int_{B_1(p) \setminus B_{1-j^{-1}}(p)} |\Delta^{\frac{m}{2}} \widetilde{u}_{k_j}|^2 dM
+ \epsilon\,.
\end{align*}
Taking $j\rightarrow \infty$, we obtain
\begin{align*}
&\eta \big(B_1(p)\big) + \int_{B_1(p)} |\Delta^{\frac{m}{2}} u^*|^2 dM \\
\leq & \int_{B_1(p)} |\Delta^{\frac{m}{2}} u^*|^2 dM
+ \varliminf_{j\rightarrow \infty} \int_{B_1(p) \setminus B_{1-j^{-1}}(p)} |\Delta^{\frac{m}{2}} \widetilde{u}_{k_j}|^2 dM
+ \epsilon.
\end{align*}
By the arbitrariness of $\epsilon$, we obtain
\begin{align}\label{ineqy:S4_temp0}
\eta \big( B_1(p) \big)
\leq \varliminf_{j\rightarrow \infty}
\int_{B_1(p) \setminus B_{1-j^{-1}}(p)} |\Delta^{\frac{m}{2}} \widetilde{u}_{k_j}|^2 dM.
\end{align}

We now focus on estimating the right-hand side of (\ref{ineqy:S4_temp0}). A simple computation gives
\begin{align*}
\big|\Delta^{\frac{m}{2}} \widetilde{u}_{k_j}(x) \big|
\leq C \sum_{\beta \in \Lambda}
\big|\nabla u_{k_j,j,\rho(x)}(x)\big|^{\beta_1}
\big|\nabla^2 u_{k_j,j,\rho(x)}(x)\big|^{\beta_2}
\cdots
\big|\nabla^m u_{k_j,j,\rho(x)}(x)\big|^{\beta_m},
\end{align*}
where $\Lambda=\{\beta=(\beta_1,\cdots, \beta_m)\,|\, \beta_s \in \N, \, \sum_{s=1}^m s\beta_s=m\}$.  By Young's inequality, we have
\begin{align*}
\big|\Delta^{\frac{m}{2}} \widetilde{u}_{k_j}(x) \big|^2
\leq C \sum_{s=1}^m \big|\nabla^s u_{k_j,j,\rho(x)}(x)\big|^{\frac{2m}{s}}.
\end{align*}
Since by \eqref{ukj}, \eqref{ukjrho} and \eqref{cond:rho} we have
\begin{align*}
\big|\nabla^s u_{k_j,j,\rho(x)}(x)\big|
\leq C \int_{B_1}\sum_{t=1}^s \big| \big(\nabla^t u_{k_j,j} \big)(x+ \rho(x)z)\big| dz
\end{align*}
and
\begin{align*}
\big|\nabla^t u_{k_j,j} \big|
\leq C \bigg(
\big|\nabla^t u_{k_j} \big|
+ \big|\nabla^t u^* \big|
+ \sum_{l=0}^{t-1} j^{t-l} \big|\nabla^l u_{k_j}-\nabla^l u^* \big|
\bigg),
\end{align*}
it follows that
\begin{align*}
\int_{B_1(p) \setminus B_{1-j^{-1}}(p)} |\Delta^{\frac{m}{2}} \widetilde{u}_{k_j}|^2 dM
&\leq C  \sum_{s=1}^m \sum_{t=1}^s
\int_{U_{\overline{\rho}}} \big|\nabla^t u_{k_j,j}(x) \big|^{\frac{2m}{s}} dM\\
& \leq C
\sum_{s=1}^m \sum_{t=1}^s
\int_{U_{\overline{\rho}}}
\big|\nabla^t u_{k_j}(x) \big|^{\frac{2m}{s}}
+ \big|\nabla^t u^*(x) \big|^{\frac{2m}{s}}dM \\
& \quad + C
\sum_{s=1}^m \sum_{t=1}^s \sum_{l=0}^{t-1}
\int_{U_{\overline{\rho}}}
j^{(t-l)\frac{2m}{s}} \big|\nabla^l u_{k_j} -\nabla^l u^* \big|^{\frac{2m}{s}}
dM \\
&\leq C\bigg(
\int_{U_{\overline{\rho}}}
\sum_{s=1}^m  \big|\nabla^s u_{k_j}(x) \big|^{\frac{2m}{s}} dM
+ \mathcal{T}_1+\mathcal{T}_2+\mathcal{T}_3
\bigg) \\
& \leq C\bigg(
\mu_{k_j} \big( U_{\overline{\rho}}\big)
+ \mathcal{T}_1+\mathcal{T}_2+\mathcal{T}_3
\bigg)\,,
\end{align*}
where $\overline{\rho}=\max \rho=c_1 j^{-m}$ (see (\ref{cond:rho})) and
\begin{align*}
U_{\overline{\rho}}
&=\big \{x \in M \,|\, \mbox{dist} \big(x, B_1(p) \setminus B_{1-j^{-1}}(p)\big) \leq \overline{\rho} \big \}, \\
\mathcal{T}_1
&= \sum_{s=1}^m \sum_{t=1}^{s-1}
\int_{U_{\overline{\rho}}}
\big|\nabla^t u_{k_j}(x) \big|^{\frac{2m}{s}}dM, \\
\mathcal{T}_2
&= \sum_{s=1}^m \sum_{t=1}^{s}
\int_{U_{\overline{\rho}}}
\big|\nabla^t u^*(x) \big|^{\frac{2m}{s}}dM,  \\
\mathcal{T}_3
&= \sum_{s=1}^m \sum_{t=1}^s \sum_{l=0}^{t-1}
\int_{U_{\overline{\rho}}}
j^{(t-l)\frac{2m}{s}} \big|\nabla^l u_{k_j} -\nabla^l u^* \big|^{\frac{2m}{s}}
dM.
\end{align*}
We will show that $\mathcal{T}_i$'s ($i=1,2,3$) vanish as $j \rightarrow \infty$. Let us deal with the terms in $\mathcal{T}_1$ first. For $1\leq t <s \leq m$, there holds
\begin{align*}
\int_{U_{\overline{\rho}}}
\big|\nabla^t u_{k_j}(x) \big|^{\frac{2m}{s}}dM
\leq
\bigg( \int_{U_{\overline{\rho}}}
\big|\nabla^t u_{k_j}(x) \big|^{\frac{2m}{t}}dM \bigg)^{\frac{t}{s}}
\mbox{Vol}\big(U_{\overline{\rho}} \big)^{1-\frac{t}{s}}
\leq C \mbox{Vol}\big(U_{\overline{\rho}} \big)^{1-\frac{t}{s}},
\end{align*}
where we used the fact $\{u_{k_j}\}$ are uniformly bounded in $W^{m,2}$ in the last inequality. It follows that $\mathcal{T}_1$ vanishes as $j\rightarrow \infty$ since $\overline{\rho} = c_1 j^{-m}$. For $\mathcal{T}_2$, it is clear that $\mathcal{T}_2$ vanishes  as $j\rightarrow \infty$ for  $\mbox{Vol}\big(U_{\overline{\rho}} \big) \rightarrow 0$ as $j\rightarrow \infty$. Now we compute the terms in $\mathcal{T}_3$. For $0\leq l<t\leq s\leq m$, there holds
\begin{align*}
\int_{U_{\overline{\rho}}} j^{(t-l)\frac{2m}{s}}
\big|\nabla^l u_{k_j} -\nabla^l u^* \big|^{\frac{2m}{s}} dM
& \leq j^{2m^2} \int_{B_{\frac{3}{2}}}
\big|\nabla^l u_{k_j} -\nabla^l u^* \big|^{\frac{2m}{s}} dM \\
&\leq C j^{2m^2}
\|u_{k_j}-u^* \|_{W^{m-1,2}(M)}^{\frac{2m}{s}}\,,
\end{align*}
where in the last inequality we used the Sobolev inequality. Hence, we can choose $k_j \geq \mathcal{}L_j$ sufficiently large such that
\begin{align*}
j^{2m^2}
\|u_{k_j}-u^* \|_{W^{m-1,2}(M)} \leq j^{-1}\,,
\end{align*}
which implies $\mathcal{T}_3$ vanishes as $j \rightarrow \infty$. Therefore, by the above arguments, we have
\begin{align*}
\eta \big( B_1(p) \big)
\leq C \varliminf_{j\rightarrow \infty}
\mu_{k_j} \big( U_{\overline{\rho}}\big).
\end{align*}
Fix any small $\kappa>0$ and set
\begin{align*}
\overline{B}_{1,\kappa}=\{x\in M: \mbox{dist} \big(x, \partial B_1(p)\big) \leq \kappa \}\,,
\end{align*}
where $\partial B_1(p)$ denotes the boundary of geodesic ball $B_1(p)$.
It is clear that $U_{\overline{\rho}} \subset \overline{B}_{1,\kappa}$ for $j$ sufficiently large, which implies
\begin{align*}
\varliminf_{j\rightarrow \infty}
\mu_{k_j} \big( U_{\overline{\rho}}\big)
\leq \varliminf_{j\rightarrow \infty}
\mu_{k_j} \big( \overline{B}_{1,\kappa}\big)
\leq \mu \big( \overline{B}_{1,\kappa}\big)
\end{align*}
where in the second inequality we used the fact $\mu_{k_j} \rightharpoonup \mu$ as Radon measures. Since $\overline{B}_{1,\kappa}$ is a Borel set for any $\kappa \in (0,\frac{1}{2})$, it follows that
\begin{align*}
\lim_{\kappa \rightarrow 0^+} \mu \big( \overline{B}_{1,\kappa} \big)
= \mu \big( \partial B_1(p) \big)
= \nu \big( \partial B_1(p) \big)\,,
\end{align*}
which implies
\begin{align}
\eta \big( B_1(p) \big)
\leq C \varliminf_{j\rightarrow \infty}
\mu_{k_j} \big( U_{\overline{\rho}}\big)
\leq  C \nu \big( \partial B_1(p) \big)\,.
\end{align}
Similar arguments apply to the case $B_r(p) \subset B_2(p)$ for any $r\in[\frac{3}{4}, \frac{5}{4}]$ and we obtain
\begin{align*}
\eta \big( B_r(p) \big) \leq  C \nu \big( \partial B_r(p) \big),
\quad r\in \big[\frac{3}{4}, \frac{5}{4} \big]\,.
\end{align*}
The fact that $\nu (M)<\infty$ implies that there are at most countably many $r \in [\frac{3}{4}, \frac{5}{4}]$ such that $\nu \big( \partial B_r(p) \big)>0$. Hence, we can choose $r_0 \in (1, \frac{5}{4})$ such that $\nu \big( \partial B_{r_0}(p) \big)=0$ which leads to
\begin{align}
\eta(B_{r_0}(p))=0.
\end{align}
It follows from Remark \ref{rem:measures} that
\begin{align*}
\nu(B_1(p))=0.
\end{align*}
which is the desired conclusion.
\end{proof}

\begin{remark}
Let us illustrate the ideas of the proof of Lemma \ref{lem:eps_regularity}.
The key to the proof is to construct $\widetilde{u}_k \in W^{m,2}(M,N)$ which equals to $u^*$ in a small geodesic ball and equals to $u_k$ out of another small geodesic ball. It is natural for us to make use of cut-off function to achieve the goal. Thus, we obtain $u_{k,j}$. However, $u_{k,j}$ may not take values in $N$ almost everywhere in the annulus $B_1(p) \setminus B_{1-j^{-1}}(p)$. To overcome this problem, we want to apply the nearest point project $\pi: N_{\sigma_0}\rightarrow N$ . Then the main difficulty arises. That is to ensure that $u_{k,j}$ is sufficiently close to $N$ almost everywhere in $M$.
The successful resolution of the difficulty is based on the following two key observations.
\begin{itemize}
\item One is that the small energy condition $\mu(B_2(p))\leq \epsilon_0$ ensure that the average $u^*_{\rho}$, $u_{k,\rho}$ of $u^*$, $u_k$ respectively ($k$  is sufficiently large) are close to $N$ due to the Poincar\'e inequality.
\item The other is that $u_{k,j}$ is close to $u_k$ near the boundary of $B_1(p)$ (i.e., $|x| \in (1-\mu_1 j^{-1},1)$) , and is also close to $u^*$ near the boundary of $B_{1-j^{-1}}(p)$ (i.e., $|x| \in (1-j^{-1},1-j^{-1}+\mu_1 j^{-1})$). Therefore, we choose a suitable smooth radially symmetric function $\rho(x)$ supported in $|x| \in [1-j^{-1},1]$ such that the average $u_{k,j,\rho}$ of $u_{k,j}$ is close to $u_{k,\rho}$ in $|x| \in (1-\mu_1 j^{-1},1)$, and is close to $u^*_{\rho}$ in $|x| \in (1-j^{-1},1-j^{-1}+\mu_1 j^{-1})$. Moreover, since $\rho(x)$ has uniformly positive lower bound in the annulus $|x| \in [1-j^{-1}+\mu_1 j^{-1}, 1-\mu_1 j^{-1}]$, $u_{k,j,\rho}$ uniformly converges to $u^*_{\rho}$ in the annulus due to $u_k \rightarrow u^*$ in $W^{m-1,2}$.
\end{itemize}
A similar result was obtained by F. H. Lin in 1999 for the harmonic maps \cite[Theorem 3.1]{Lin99}, where the construction of extension was specific for $W^{1, p}$ maps. Our method is an adaption of the method used in \cite{He18}.
Finally, we specify the choice of the parameters $j$, $\mathcal{L}_j$ and $k_j$. It is sufficient for reader to keep in mind that $j$ is sufficiently large to ensure the volume of $B_1(p) \setminus B_{1-j^{-1}}(p)$ is sufficiently small, while $\mathcal{L}_j$ and $k_j$ are large enough is to make sure $\|u_k -u^*\|_{W^{m-1,2}}$ small enough.
\end{remark}

\begin{remark}\label{rem:rho}
Here we show explicitly how to construct a smooth function $\rho(s)$ that satisfies \eqref{cond:rho} and \eqref{cond:rho2}. Firstly, we find a smooth even function $f: \R \rightarrow [0,\infty)$ such that
\begin{equation*}
\left\{
\begin{aligned}
& f(s)>0, \quad \forall s \in \big(-\frac{1}{2},\frac{1}{2} \big),  \\
& f(s)=0, \quad \forall s \in (-\infty, -\frac{1}{2}] \cup [\frac{1}{2},\infty), \\
& f(s) \text{ is monotonically increasing on } (-\infty, 0], \\
& f(s) \text{ is monotonically decreasing on } [0, \infty).
\end{aligned}
\right.
\end{equation*}
Hence, we know that $c_1:= f(0)=\max \{ f(s): s\in \R \}>0$. Let
\begin{align*}
c_2:=\frac{f(-\frac{1}{2}+ \mu_1)}{\mu_1 f(0)}\,,
\end{align*}
where $\mu_1>0$ is from {\bf Step Two} in the proof of Lemma \ref{lem:eps_regularity} and it depends only on the closed Riemannian manifolds $M,N$.
Then we have
\begin{equation*}
\left\{
\begin{aligned}
& f(s) \leq c_2 \mu_1 f(0), \quad \forall s
\in \big(-\frac{1}{2}, -\frac{1}{2}+\mu_1) \cup \big(\frac{1}{2}-\mu_1, \frac{1}{2}), \\
& f(s) \geq c_2 \mu_1 f(0), \quad \forall s
\in \big[ -\frac{1}{2}+\mu_1, \frac{1}{2}-\mu_1 \big].
\end{aligned}
\right.
\end{equation*}
Now by scaling and translating,
\begin{align*}
\rho(s):=j^{-m} f \big(js+\frac{1}{2}-j \big)
\end{align*}
is the desired smooth function satisfying \eqref{cond:rho} and \eqref{cond:rho2}.
\end{remark}

Set
\begin{align}
\mathcal{S}:= \{p \in M \,|\, \lim_{r \rightarrow 0}
\varliminf_{k \rightarrow \infty} \mu_k \big( B_r(p) \big) \geq \frac{1}{2}\epsilon_0\}
\end{align}
where $\epsilon_0$ is defined in Lemma \ref{lem:eps_regularity}. A direct consequence of Lemma \ref{lem:eps_regularity} is the following.
\begin{lemma}\label{lem:support_nu}
The support of $\nu$ equals $\mathcal{S}$, which contains at most finitely many points.
\end{lemma}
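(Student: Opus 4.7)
The plan is to establish the two set inclusions $\mathcal{S} \subseteq \operatorname{supp}(\nu)$ and $\operatorname{supp}(\nu) \subseteq \mathcal{S}$ separately, and then read off finiteness of $\mathcal{S}$ as a consequence of the atomic lower bound produced in the first inclusion.

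First I would show $\mathcal{S} \subseteq \operatorname{supp}(\nu)$ by proving that every point of $\mathcal{S}$ carries an atom of $\nu$ of mass at least $\tfrac{1}{2}\epsilon_0$. Fix $p \in \mathcal{S}$. For each small $r > 0$, the weak convergence $\mu_k \rightharpoonup \mu$ combined with the standard semicontinuity properties of Radon measures (lower semicontinuity on open sets, upper semicontinuity on closed sets) gives
\[
\mu(\overline{B}_r(p)) \;\geq\; \varlimsup_{k\to\infty} \mu_k(\overline{B}_r(p)) \;\geq\; \varliminf_{k\to\infty} \mu_k(B_r(p)) \;\geq\; \tfrac{1}{2}\epsilon_0.
\]
Letting $r \to 0^+$ and using countable additivity yields $\mu(\{p\}) \geq \tfrac{1}{2}\epsilon_0$. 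On the other hand, the density of $\mu^*$ lies in $L^1(M)$ (by Gagliardo--Nirenberg applied to $u^* \in W^{m,2} \cap L^\infty$ in dimension $2m$, each $|\nabla^i u^*|^{2m/i}$ is integrable), so $\mu^*$ is absolutely continuous with respect to the volume measure and in particular $\mu^*(\{p\}) = 0$. Therefore $\nu(\{p\}) = \mu(\{p\}) \geq \tfrac{1}{2}\epsilon_0 > 0$, which forces $p \in \operatorname{supp}(\nu)$. Finiteness of $\mathcal{S}$ is then immediate, since $\mu(M) < \infty$ bounds the number of atoms of mass $\geq \tfrac{1}{2}\epsilon_0$.

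For the reverse inclusion $\operatorname{supp}(\nu) \subseteq \mathcal{S}$ I would argue by contrapositive, reducing to Lemma \ref{lem:eps_regularity} via rescaling. If $p \notin \mathcal{S}$, then there is some $r > 0$ with $\varliminf_k \mu_k(B_r(p)) < \tfrac{1}{2}\epsilon_0 < \epsilon_0$, and hence $\mu(B_r(p)) < \epsilon_0$ by lower semicontinuity on open sets. Invoking the scale invariance \eqref{eqn:scal_inv} of both $E_m$ and the auxiliary functional $F$, I rescale the metric by $g \mapsto \lambda^2 g$ with $\lambda = 2/r$; this leaves the measures $\mu_k$, $\mu$, $\nu$ unchanged while turning $B_r(p)$ into a geodesic ball of radius $2$ about $p$ in the new metric. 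The hypothesis of Lemma \ref{lem:eps_regularity} is then satisfied, so $\nu \equiv 0$ on the corresponding geodesic ball of radius $1$, which is a neighborhood of $p$ in the original metric. Thus $p \notin \operatorname{supp}(\nu)$, completing the proof.

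The argument involves no deep new input beyond Lemma \ref{lem:eps_regularity}; the only delicate points are the open-versus-closed bookkeeping when passing to the limit of $\mu_k(B_r(p))$ and verifying that a single rescaling genuinely brings us into the normalized setup used in Lemma \ref{lem:eps_regularity}. Neither presents a real obstacle.
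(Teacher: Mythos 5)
Your proof is correct and follows essentially the same route as the paper's: establish $\mathcal{S}\subseteq\operatorname{supp}(\nu)$ by the Portmanteau inequalities and the absence of atoms for $\mu^*$ to get $\nu(\{p\})\ge\tfrac12\epsilon_0$, deduce finiteness from $\nu(M)<\infty$, and show $\operatorname{supp}(\nu)\subseteq\mathcal{S}$ by feeding the small-mass estimate $\mu(B_{r}(p))<\epsilon_0$ into Lemma \ref{lem:eps_regularity}. The only cosmetic difference is that you make explicit the rescaling to a unit-scale ball and the absolute continuity of $\mu^*$, both of which the paper leaves implicit.
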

\begin{proof}
It is clear that $\mathcal{S}$ is contained in the support of $\nu$. In fact, for any $p \in \mathcal{S}$, $r \in (0,\tau_0)$ ($\tau_0$ is the injectivity radius of $M$), it follows from $\mu_k \rightharpoonup \mu$ that
\begin{align*}
\mu( \overline{B}_r(p))
\geq \varlimsup_{k \rightarrow \infty} \mu_k ( \overline{B}_r(p))
\geq \varliminf_{k \rightarrow \infty} \mu_k ( B_r(p))
\geq \frac{1}{2} \epsilon_0,
\end{align*}
where $\overline{B}_r(p)$ denotes the closure of open geodesic ball $B_r(p)$. Hence,
\begin{align}\label{ineqy:nu_temp0}
\nu(\{p\})= \lim_{r \rightarrow 0} \nu( \overline{B}_r(p))
=\lim_{r \rightarrow 0}
\bigg( \mu( \overline{B}_r(p)) -\mu^* (\overline{B}_r(p)) \bigg)
\geq \frac{1}{2} \epsilon_0,
\end{align}
which is the desired claim. Now we will show that the support of $\nu$ is also contained in $\mathcal{S}$. It suffices to prove that for any $p \in M \setminus \mathcal{S}$, there exists $r_0>0$ such that $\nu (B_{r_0}(p))$=0.
By the definition of $\mathcal{S}$, we know that there exists $r_1 >0$ such that
\begin{align*}
\varliminf_{k \rightarrow \infty} \mu_k \big( B_{r_1} (p) \big)
< \frac{1}{2}\epsilon_0,
\end{align*}
which implies
\begin{align*}
\mu( B_{r_1} (p))
\leq \varliminf_{k \rightarrow \infty} \mu_k \big( B_{r_1} (p) \big)
< \frac{1}{2}\epsilon_0,
\end{align*}
By Lemma \ref{lem:eps_regularity}, we know that $\nu (B_{r_0}(p))=0$ with $r_0=\frac{1}{2} r_1$. Finally, it follows from (\ref{ineqy:nu_temp0}) and $\nu(M)<\infty$ that $\mathcal{S}$ contains at most finitely many points. The proof is completed.
\end{proof}

\subsection{Existence of minimizers}\label{subsec:existence}
In this subsection, we first prove that $u^*$ is a smooth extrinsic $m$-polyharmonic from $M$ into $N$. Then we show that $\mathcal{S}$ is an empty set under the assumption $\pi_{2m}(N)=\{0\}$, which implies Theorem \ref{thm:main} due to Lemma \ref{lem:equi_defect}. We begin by proving the following lemma.
\begin{lemma}\label{lem:u*smooth}
Suppose $\Omega$ is an open subset of $M$. If $u_k$ converges to $u^*$ strongly in $W^{m,2}(\Omega, N)$, then $u^*$ is an extrinsic $m$-polyharmonic map from $\Omega$ to $N$. In particular, $u^* \in C^\infty(\Omega, N)$.
\end{lemma}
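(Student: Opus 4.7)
The plan is to show that $u^*$ is locally energy-minimising on $\Omega$ in the following sense: for every ball $B \subset \subset \Omega$ and every $\phi \in C_c^\infty(B, \R^K)$, the variation $v_t := \pi(u^* + t\phi)$—which is well-defined in $N$ for $|t|$ small, since then $u^* + t\phi$ lies in the tubular neighbourhood $N_{\sigma_0}$ a.e.—satisfies $E_m(u^*, B) \leq E_m(v_t, B)$. Differentiating at $t = 0$ then yields the Euler--Lagrange equation for extrinsic $m$-polyharmonic maps, namely $\Delta^m u^* \perp T_{u^*} N$ in the weak sense on $\Omega$, and the conclusion $u^* \in C^\infty(\Omega, N)$ will follow from the known regularity theory for such weakly extrinsic polyharmonic maps in critical dimension.

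A preliminary observation is that $v_t \sim u^*$ in $W^{m,2}(M,N)$: the path $s \mapsto \pi(u^* + s\phi)$ is continuous in $W^{m,2}$ and hence in $W^{1,2m}$, so Corollary~\ref{coro:W1n_homotopic} applied along a fine subdivision $0 = s_0 < s_1 < \cdots < s_L = t$ gives $v_{s_i} \sim v_{s_{i+1}}$ and hence $v_t \sim v_0 = u^*$, so $v_t$ lies in the homotopy class $\alpha$.

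The heart of the argument is to build, for each such $v_t$ and each large $k$, a competitor $v_{k,t} \in W^{m,2}(M,N)$ which (i) lies in the homotopy class $\alpha$; (ii) equals $v_t$ on $B$; (iii) equals $u_k$ outside a slightly larger ball $B' \subset \subset \Omega$; and (iv) satisfies $E_m(v_{k,t}, B' \setminus B) \to E_m(u^*, B' \setminus B)$ as $k \to \infty$. This is the same kind of gluing/mollification construction used in the proof of Lemma~\ref{lem:eps_regularity}, now applied on the annulus $B' \setminus B$ where $v_t = u^*$; the new input is that $u_k \to u^*$ strongly in $W^{m,2}(\Omega)$, which both keeps the glued map inside $N_{\sigma_0}$ (so $\pi$ applies without error) and forces $\| v_{k,t} - u_k\|_{W^{1,2m}} \to 0$, whence (i) follows from Corollary~\ref{coro:W1n_homotopic}.

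Once $v_{k,t}$ is in hand, the minimising property $E_m(u_k) \leq E_m(v_{k,t}) + o(1)$ together with the additivity of $E_m$ and the agreement $v_{k,t} = u_k$ off $B'$ yields
\[
E_m(u_k, B) + E_m(u_k, B' \setminus B) \leq E_m(v_t, B) + E_m(v_{k,t}, B' \setminus B) + o(1).
\]
Strong convergence on $\Omega$ sends $E_m(u_k, B) \to E_m(u^*, B)$ and both annular terms to $E_m(u^*, B' \setminus B)$, leaving $E_m(u^*, B) \leq E_m(v_t, B)$. Differentiating in $t$ then produces the weak $m$-polyharmonic system on $\Omega$, and regularity gives $u^* \in C^\infty(\Omega, N)$. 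The main obstacle is the construction of $v_{k,t}$ satisfying (i)--(iv) simultaneously: this reduces to the delicate mollification/cut-off argument of Lemma~\ref{lem:eps_regularity}, now benefiting from strong (rather than merely small-energy) $W^{m,2}$-control on the annulus.
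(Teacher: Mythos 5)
Your approach is correct in spirit but takes a genuinely different, and noticeably heavier, route than the paper. The paper perturbs the minimizing sequence rather than the limit: it sets $u_k^t := \pi(u_k + t\chi)$ for $\chi \in C_0^\infty(\Omega,\R^K)$, which lies in $\alpha$ for free since $t \mapsto u_k^t$ is a continuous deformation of the smooth map $u_k$; no gluing or homotopy criterion is needed. Minimality of the sequence then forces $\lim_{k}\int_M \langle \Delta^{m/2}u_k, \Delta^{m/2}(D\pi(u_k)\chi)\rangle\,dM = 0$ (by expanding $E_m(u_k^t)$ in $t$ with a $k$-uniform quadratic remainder and sending $t\to 0^\pm$), and the strong $W^{m,2}(\Omega)$-convergence is used only at the end to pass this identity to the limit $u^*$. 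You instead perturb $u^*$ to $v_t = \pi(u^* + t\phi)$ and then must rebuild a competitor $v_{k,t} \in \alpha$ by the Lemma~\ref{lem:eps_regularity}-style mollification/gluing on an annulus, plus control $E_m(v_{k,t}, B'\setminus B)$; this works, but essentially reproves a chunk of Lemma~\ref{lem:eps_regularity} that the paper's choice of variation sidesteps entirely. The trade-off is that your route yields the slightly stronger statement that $u^*$ is a local minimizer on each ball $B\subset\subset\Omega$, whereas the paper goes straight to the weak Euler--Lagrange equation.

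One small imprecision: $\|v_{k,t} - u_k\|_{W^{1,2m}}$ does not tend to $0$ — on $B$ the two maps differ by $v_t - u_k \to v_t - u^*$, which is nonzero for $t \neq 0$. What you actually get is $\|v_{k,t} - u_k\|_{W^{1,2m}} \leq C|t| + o_k(1)$, which is below the $\varepsilon_0$ of Corollary~\ref{coro:W1n_homotopic} once $|t|$ is small and $k$ large; since you differentiate at $t=0$ anyway, restricting $|t|$ costs nothing. Also, the preliminary observation that $v_t \sim u^*$ is not used in the final argument — what you need is $v_{k,t} \sim u_k$ (so $v_{k,t}\in\alpha$), which your $W^{1,2m}$-closeness estimate gives directly.
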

\begin{proof}
Due to Gastel and Scheven (\cite{GS}), we know that weakly $m$-polyharmonic maps (extrinsic or intrinsic) from $M$ to $N$ in critical dimensions  are smooth. Thus we only need to show that $u^*$ is a weakly $m$-polyharmonic map from $\Omega$ to $N$.

For any $\chi \in C_0^\infty(\Omega, \R^K)$ fixed, we define
\begin{align}
u_k^t(x) := \pi \bigg( u_k(x)+ t\, \chi(x) \bigg),
\quad t \in (-\delta_0 , \delta_0), \,x\in M\,,
\end{align}
where $\delta_0>0$ is sufficiently small and independent of $u_k$, and $\pi$ is the nearest point projection (see the proof of Lemma \ref{lem:eps_regularity}). It is obvious that $u_k^t \in C^{\infty}(M,N)$ is homotopic to $u_k$, thus $u_k^t \in \alpha$. We compute
\begin{align*}
E_m(u_k^t)= \int_M \big|\Delta^{\frac{m}{2}} u_k^t\big|^2 dM
= E_m(u_k) + 2 t \int_M \big \langle \Delta^{\frac{m}{2}}u_k, \Delta^{\frac{m}{2}} \big( D\pi(u_k) \chi \big) \big\rangle dM + O_k(t^2).
\end{align*}
Note that by estimating the second derivative of $E_m(u_k^t)$ with respect to $t$, we can obtain $|O_k(t^2)| \leq C t^2$ where $C$ is independent of $k$.
Since $u_k$ is the minimizing sequence in the homotopy class $\alpha$ and $u_k^t \in \alpha$, it follows that
\begin{align*}
\varliminf_{k \rightarrow \infty} E_m(u_k^t)
\geq \varliminf_{k \rightarrow \infty} E_m(u_k)\,,
\quad \forall t \in (-\delta_0, \delta_0),
\end{align*}
which implies
\begin{align}\label{eqn:weaksol_temp1}
\lim_{k \rightarrow \infty} \int_M \big \langle \Delta^{\frac{m}{2}}u_k, \Delta^{\frac{m}{2}} \big( D\pi(u_k) \chi \big) \big\rangle \, dM =0.
\end{align}

We claim that if $u_k$ converges strongly to $u^*$ in $W^{m,2}(\Omega, N)$, then we have
\begin{align}
\lim_{k \rightarrow \infty} \int_M \big \langle \Delta^{\frac{m}{2}}u_k, \Delta^{\frac{m}{2}} \big( D\pi(u_k) \chi \big) \big\rangle \,dM
=
\int_M \big \langle \Delta^{\frac{m}{2}}u^*, \Delta^{\frac{m}{2}} \big( D\pi(u^*) \chi \big) \big\rangle \,dM.
\end{align}
If this claim holds, it follows from (\ref{eqn:weaksol_temp1}) that
\begin{align*}
\int_M \big \langle \Delta^{\frac{m}{2}}u^*, \Delta^{\frac{m}{2}} \big( D\pi(u^*) \chi \big) \big\rangle \,dM =0.
\end{align*}
Hence, $u^*$ is a weakly $m$-polyharmonic map from $\Omega$ to $N$ and the desired result is obtained.

\medskip
Now we prove the claim above. For simplicity, we only give the proof of the claim for the case $m=2$; the other cases are similar in nature. That is,
\begin{align}\label{eqn:weaksol_temp2}
\lim_{k \rightarrow \infty}
\int_M \big \langle \Delta u_k, \Delta \big( D\pi(u_k) \chi \big) \big\rangle dM
=
\int_M \big \langle \Delta u^*, \Delta \big( D\pi(u^*) \chi \big) \big\rangle dM.
\end{align}
We compute
\begin{align}\label{eqn:weaksol_temp3}
\Delta \big( D\pi(u_k) \chi \big)
=& D^2 \pi (u_k) (\chi, \Delta u_k)
+ D^3\pi (u_k) (\chi, \nabla u_k) \nabla u_k  \notag \\
& +2 D^2 \pi (u_k) (\nabla \chi, \nabla u_k)
+D\pi (u_k) \Delta \chi.
\end{align}
We first deal with the term $D^2 \pi (u_k) (\chi, \Delta u_k)$ as an illustration. Since
\begin{align*}
& \bigg| \big \langle \Delta u_k, D^2 \pi (u_k) (\chi, \Delta u_k) \big) \big\rangle
- \langle \Delta u^*, D^2 \pi (u^*) (\chi, \Delta u^*) \big\rangle \bigg| \\
\leq&  \,C \bigg(
| \Delta u_k - \Delta u^*| \, |\Delta u_k|
+  |\Delta u^*| \,| \Delta u_k - \Delta u^*|
+ |\Delta u^*|^2 |D^2 \pi (u_k)-D^2 \pi (u^*)|
\bigg) |\chi| \\
\leq&  \,C \bigg(
| \Delta u_k - \Delta u^*| \, |\Delta u_k|
+ | \Delta u_k - \Delta u^*| |\Delta u^*|
+ |\Delta u^*|^2 |u_k-u^*|
\bigg)|\chi|\,,
\end{align*}
where $C$'s stand for positive constants independent of $u_k$, we have
\begin{align*}
& \int_M \bigg| \big \langle \Delta u_k, D^2 \pi (u_k) (\chi, \Delta u_k) \big) \big\rangle
- \langle \Delta u^*, D^2 \pi (u^*) (\chi, \Delta u^*) \big\rangle \bigg| dM \\
\leq & \,C \bigg(
\|\Delta u_k -\Delta u^*\|_{L^2(\Omega)}
\big( \|\Delta u_k \|_{L^2(\Omega)} + \|\Delta u^* \|_{L^2(\Omega)} \big)
+\int_{\Omega} |\Delta u^*|^2 |u_k-u^*| dM
\bigg).
\end{align*}
Since $\|u_k-u^*\|_{W^{2,2}(\Omega)} \rightarrow 0$ as $k \rightarrow \infty$, it is obvious that
\begin{align*}
\|\Delta u_k -\Delta u^*\|_{L^2(\Omega)}
\bigg( \|\Delta u_k \|_{L^2(\Omega)} + \|\Delta u^* \|_{L^2(\Omega)} \bigg) \rightarrow 0 \quad \mbox{as }k \rightarrow \infty.
\end{align*}
By dominated convergence theorem, we also obtain
\begin{align*}
\lim_{k \rightarrow \infty}\int_{\Omega} |\Delta u^*|^2 |u_k-u^*| dM =0.
\end{align*}
Hence,
\begin{align*}
\lim_{k \rightarrow \infty}
\int_{M}\big \langle \Delta u_k, D^2 \pi (u_k) (\chi, \Delta u_k) \big) \big\rangle
= \int_{M} \big \langle \Delta u^*, D^2 \pi (u^*) (\chi, \Delta u^*) \big\rangle dM\,.
\end{align*}
By similar arguments one can deal with the other terms in (\ref{eqn:weaksol_temp3}) and thus (\ref{eqn:weaksol_temp2}) is proved.{\qedhere}
\end{proof}

An immediate consequence of Lemma \ref{lem:u*smooth} is the following.
\begin{theorem}\label{thm:u*smooth}
$u^*$ is an extrinsic $m$-polyharmonic from $M$ into $N$. In particular, $u^* \in C^\infty(M,N)$.
\end{theorem}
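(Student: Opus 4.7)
The proof combines the $\epsilon$-regularity of Lemma \ref{lem:eps_regularity}, the weak-equation result of Lemma \ref{lem:u*smooth}, and a capacity argument to remove a finite singular set.

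First, by Lemma \ref{lem:support_nu} the singular set $\mathcal{S}=\{p_1,\dots,p_L\}$ is finite. For any $q\in M\setminus\mathcal{S}$, the very definition of $\mathcal{S}$ gives some $r=r(q)>0$ with $\varliminf_{k}\mu_k(B_{2r}(q))<\epsilon_0/2$, where $\epsilon_0$ is from Lemma \ref{lem:eps_regularity}. Since $\mu_k\rightharpoonup\mu$ as Radon measures this yields $\mu(B_{2r}(q))<\epsilon_0$, and after the metric rescaling that ensures injectivity radius $>2$, Lemma \ref{lem:eps_regularity} forces $\nu\equiv 0$ on $B_r(q)$. Covering $M\setminus\mathcal{S}$ by such balls gives $\nu\equiv 0$ on $M\setminus\mathcal{S}$, and combining Remark \ref{rem:measures} with Lemma \ref{lem:equi_defect} applied on compactly contained open subsets yields $u_k\to u^*$ strongly in $W^{m,2}_{\mathrm{loc}}(M\setminus\mathcal{S},N)$.

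Next, applying Lemma \ref{lem:u*smooth} on every open $\Omega\subset\subset M\setminus\mathcal{S}$ shows that $u^*$ is an extrinsic $m$-polyharmonic map on $M\setminus\mathcal{S}$, and in particular smooth there. The main step is to remove the finitely many singular points, using the fact that in the critical dimension $n=2m$ a single point has vanishing $W^{m,2}$-capacity: for each $p_i$ there exist smooth cutoffs $\zeta_\varepsilon$ vanishing near $p_i$ and equal to $1$ outside $B_{2\varepsilon}(p_i)$ with $\|\zeta_\varepsilon-1\|_{W^{m,2}(M)}\to 0$ as $\varepsilon\to 0$. For any test field $\chi\in C^\infty(M,\R^K)$, the product $\zeta_\varepsilon\chi$ is a legitimate test field supported in $M\setminus\mathcal{S}$, and the weak equation gives $\int_M\langle\Delta^{m/2}u^*,\Delta^{m/2}(D\pi(u^*)\zeta_\varepsilon\chi)\rangle\,dM=0$. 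Since $u^*\in W^{m,2}(M)$ every term in the expanded bilinear form is in $L^1$, and $\zeta_\varepsilon\chi\to\chi$ strongly in $W^{m,2}(M)$, so passing $\varepsilon\to 0$ yields the weak Euler--Lagrange equation tested against $\chi$ on all of $M$. The technical care here is to track the error terms where derivatives fall on $\zeta_\varepsilon$ and verify that they vanish in $L^1$ by the capacity bound; this removability step is what I expect to be the main obstacle.

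Finally, once $u^*$ weakly satisfies the extrinsic $m$-polyharmonic equation on all of $M$, the regularity theorem of Gastel--Scheven (invoked in Lemma \ref{lem:u*smooth}) upgrades $u^*$ to an element of $C^\infty(M,N)$, completing the proof.
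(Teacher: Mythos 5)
Your proof is correct and follows essentially the same route as the paper: deduce strong $W^{m,2}_{\mathrm{loc}}$-convergence away from the finite set $\mathcal{S}$ via Lemma \ref{lem:support_nu} and Lemma \ref{lem:equi_defect}, apply Lemma \ref{lem:u*smooth} to get smoothness and the weak equation on $M\setminus\mathcal{S}$, remove the finitely many singular points by the vanishing of the $W^{m,2}$-capacity of a point in the critical dimension $2m$, and finish with the Gastel--Scheven regularity theorem. The only difference is that the paper compresses the removable-singularity step into a single sentence, while you correctly flag and sketch the underlying capacity/cutoff argument (which indeed goes through, since the error terms with derivatives falling on $\zeta_\varepsilon$ are controlled by H\"older together with $\|\nabla^i\zeta_\varepsilon\|_{L^{2m/i}}\to 0$).
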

\begin{proof}
By Lemma \ref{lem:support_nu} and Lemma \ref{lem:equi_defect}, we know that for any open set $\Omega \subset \subset M \setminus \mathcal{S}$, $u_k$ converges to $u^*$ strongly in $W^{m,2}(\Omega,N)$. It follows from Lemma \ref{lem:u*smooth} that $u^*\in C^\infty (M \setminus \mathcal{S}, N)$ is an extrinsic $m$-polyharmonic map from $M \setminus \mathcal{S}$ into $N$. Since $\mathcal{S}$ contains at most finitely many points and $u^*\in W^{m,2}(M,N)$, we conclude that $u^*$ is a weakly $m$-polyharmonic map from $M$ into $N$. Hence, the proof is completed due to the regularity result of Gastel and Scheven \cite{GS}.
\end{proof}

We now proceed to show that $\mathcal{S}$ is empty provided that $\pi_{2m}(N)$ is trivial.
\begin{theorem}\label{thm:S-empty}
If $\pi_{2m}(N) = \{0\}$, then $\mathcal{S}$ is empty.
\end{theorem}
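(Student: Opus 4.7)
The plan is to argue by contradiction: assume $\mathcal{S}\ne\emptyset$ and construct a sequence $\{\widetilde{u}_k\}\subset W^{m,2}(M,N)$ lying in the homotopy class $\alpha$ with $\widetilde{u}_k\to u^*$ in $W^{1,2m}(M)$. Corollary \ref{coro:W1n_homotopic} then forces $u^*\in\alpha$; weak lower semicontinuity of $E_m$ yields $E_m(u^*)=\mathcal{E}(m,\alpha)$, whence $\|\Delta^{m/2}u_k\|_{L^2}\to\|\Delta^{m/2}u^*\|_{L^2}$. Combined with weak $W^{m,2}$ convergence this promotes $\Delta^{m/2}u_k\to\Delta^{m/2}u^*$ in $L^2$, and the integration-by-parts identity from the proof of Lemma \ref{lem:equi_defect}, together with strong $W^{m-1,2}$ convergence, further upgrades this to strong $W^{m,2}$ convergence of $u_k\to u^*$. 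Lemma \ref{lem:equi_defect} then gives $\nu\equiv 0$, which by Lemma \ref{lem:support_nu} contradicts $\mathcal{S}\ne\emptyset$.

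By Lemma \ref{lem:support_nu} the set $\mathcal{S}=\{p_1,\ldots,p_L\}$ is finite. I choose $r>0$ small enough that the geodesic balls $B_{2r}(p_l)$ are pairwise disjoint and $\int_{B_{2r}(p_l)}|Du^*|^{2m}\,dM$ is small (using $|Du^*|^{2m}\in L^1$ via Sobolev embedding). Since $\nu$ is supported on $\mathcal{S}$, the same integration-by-parts identity used in Lemma \ref{lem:equi_defect} shows $\eta$ is supported on $\mathcal{S}$ as well, so $u_k\to u^*$ strongly in $W^{m,2}$ (hence in $W^{1,2m}$) on every open $U\subset\subset M\setminus\mathcal{S}$. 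I then choose $r'\in(r,2r)$ so that $\mu_k(B_{r'}(p_l)\setminus B_r(p_l))$ is small for all large $k$, which is automatic once $r$ is small because the annulus is disjoint from $\mathcal{S}$. On each such annulus I apply a rescaled version of the extension construction from Lemma \ref{lem:eps_regularity}, Steps One and Two, performed purely on the annular region---this is legitimate because the small-energy hypothesis in that proof is used only to control the mollification on the annular interpolation region, while on the inner ball the competitor is simply set equal to $u^*$, already a smooth $N$-valued map by Theorem \ref{thm:u*smooth}. The resulting $\widetilde{u}_k\in W^{m,2}(M,N)$ satisfies $\widetilde{u}_k=u^*$ on $\bigcup_l B_r(p_l)$ and $\widetilde{u}_k=u_k$ on $M\setminus\bigcup_l B_{r'}(p_l)$, and the Step Four bookkeeping from Lemma \ref{lem:eps_regularity} controls $\|\widetilde{u}_k-u^*\|_{W^{1,2m}}$ on each annulus, giving the desired global $W^{1,2m}$ convergence $\widetilde{u}_k\to u^*$.

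The decisive and most delicate step is to prove $\widetilde{u}_k\sim u_k$ in $W^{m,2}(M,N)$; this is precisely where $\pi_{2m}(N)=\{0\}$ enters. Since the two maps coincide on $M\setminus\bigcup_l B_{r'}(p_l)$, it suffices to exhibit a relative homotopy inside each $B_{r'}(p_l)$ fixing the common boundary values on $\partial B_{r'}(p_l)$. Gluing $u_k|_{\overline{B_{r'}(p_l)}}$ to a reversed copy of $\widetilde{u}_k|_{\overline{B_{r'}(p_l)}}$ along the common boundary produces a map $S^{2m}\to N$ (continuous after smoothing via Theorem \ref{thm:density}); by $\pi_{2m}(N)=\{0\}$ this map is null-homotopic, and the homotopy extension property supplies a continuous deformation of $u_k$ to $\widetilde{u}_k$ supported in $B_{r'}(p_l)$. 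Invoking the Section \ref{sec:prelim} definition of homotopy in $W^{m,2}(M,N)$ via smooth approximation then yields $\widetilde{u}_k\in\alpha$. The main obstacle is therefore twofold: adapting Lemma \ref{lem:eps_regularity}'s extension so that the small-energy hypothesis is required only on the outer annulus (automatic here because all $\mathcal{S}$-concentration sits strictly inside the inner ball, where the competitor is simply $u^*$), and converting the topological null-homotopy coming from $\pi_{2m}(N)=0$ into a path compatible with the Section \ref{sec:prelim} $W^{m,2}$ definition---both being routine once set up carefully, but requiring attention.
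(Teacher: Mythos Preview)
Your proposal is correct and follows a genuinely different route from the paper's argument, although both share the same core construction. The paper, after building the competitor $\widetilde{u}_k$ via Steps One and Two of Lemma~\ref{lem:eps_regularity} and invoking $\pi_{2m}(N)=0$ to secure $\widetilde{u}_k\in\alpha$, proceeds \emph{locally}: it repeats the Step~Four computation to bound $\int_{B_1(p)\setminus B_{1-j^{-1}}(p)}|\Delta^{m/2}\widetilde{u}_{k_j}|^2$ and concludes directly that $\eta\equiv 0$ on a ball around the concentration point. You instead argue \emph{globally}: from $\widetilde{u}_k\in\alpha$ and $\|\widetilde{u}_k-u^*\|_{W^{1,2m}}\le\varepsilon_0$ you deduce $u^*\in\alpha$ via Corollary~\ref{coro:W1n_homotopic}, then use energy minimality to force $\|\Delta^{m/2}u_k\|_{L^2}\to\|\Delta^{m/2}u^*\|_{L^2}$, upgrade weak to strong $L^2$ convergence of $\Delta^{m/2}u_k$ by the Radon--Riesz property, and finish with the integration-by-parts identity. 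This trades the paper's detailed Step~Four annular energy estimate for a softer Hilbert-space argument, which is a pleasant simplification. One minor inaccuracy: the ``Step~Four bookkeeping'' you cite to control $\|\widetilde{u}_k-u^*\|_{W^{1,2m}}$ on the annulus is really the Step~\emph{Three} estimate (which bounds $\|\widetilde{u}_k-u_k\|_{W^{1,2m}}$), combined with the strong $W^{1,2m}$ convergence $u_k\to u^*$ on the annulus; Step~Four concerns $|\Delta^{m/2}\widetilde{u}_k|^2$ and is precisely what your approach avoids.
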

\begin{proof}
Suppose otherwise that there exists a point $p\in \mathcal{S}$. By Lemma \ref{lem:support_nu}, $\mathcal{S}$ contains at most finitely many points, and so we can choose a geodesic ball $B_{3R_0}(p)$ such that $B_{3R_0}(p) \cap \mathcal{S}=\{ p\}$. This implies that the support of measure $\nu$ only contains one point $p$ on the domain $B_{3R_0}(p)$. Hence, for any $r_0 \in (0, 2R_0)$, there holds
\begin{align}\label{ineqy:S-empty:temp0}
\lim_{k\rightarrow \infty}
\|u_k -u^* \|_{W^{m,2}\big( B_{2R_0}(p) \setminus B_{r_0}(p)\big)} =0.
\end{align}
Without loss of generality, we can assume that, by scaling and choosing a subsequence if necessary, $R_0=1$ and for $k \geq \mathcal{K}$ ($\mathcal{K}$ is a sufficiently large positive integer) there holds
\begin{align}\label{ineqy:S-empty:temp1}
\| u_k \|_{W^{m,2}\big(B_2(p)\setminus B_{\frac{1}{4}}(p)\big)}
+ \| u^* \|_{W^{m,2}(B_2(p))}
\leq \theta< <\epsilon_0,
\end{align}
where $\epsilon_0$ is defined in Lemma \ref{lem:eps_regularity}, and $\theta$ is a sufficiently small positive number. We will show that \eqref{ineqy:S-empty:temp1} together with the condition $\pi_{2m}(N)=\{0\}$ enable us to push through similar arguments as in the proof of Lemma \ref{lem:eps_regularity}, which yields $\nu =0$ on $B_{\frac{1}{2}}(p)$ and thus $p$ is not an energy-concentration point (i.e., $p \notin \mathcal{S}$). This leads to a contradiction.
\begin{enumerate}
\item The small energy condition (\ref{ineqy:S-empty:temp1}) for $u_k$ and $u^*$ on the annulus $B_2(p) \setminus B_{\frac{1}{4}}(p)$ is sufficient to guarantee that {\bf Step One} and {\bf Step Two} in the proof of Lemma \ref{lem:eps_regularity} are valid. Hence, we obtain  an "almost energy-minimizing" sequence $\widetilde{u}_k$ for large $k$'s. Since $u^*\in C^{\infty}(M,N)$ (see Theorem \ref{thm:u*smooth}), we know $\widetilde{u}_k \in C^{\infty}(M,N)$ and therefore $\widetilde{u}_k \in W^{m,2}(M,N)$.

\item By the construction of $\widetilde{u}_k$, we have
    \begin{align}\label{ineqy:S-empty:temp3}
    \widetilde{u}_k |_{M \setminus B_1 (p)}= u_k |_{M \setminus B_1 (p)}\,,
    \end{align}
    where $B_1 (p)$ denotes the open geodesic ball. We know that the $2m$-dimensional sphere ${\mathbb{S}}^{2m}$ can be obtained by gluing two $2m$-dimensional balls (e.g., $B_1(p)$) along their boundaries. Thus we can define a continuous map $\Phi_k$ from ${\mathbb{S}}^{2m}$ into $N$ by gluing maps $\widetilde{u}_k$ and $u_k$ along the boundary of $B_1(p)$ due to \eqref{ineqy:S-empty:temp3}. The condition $\pi_{2m}(N)=\{ 0\}$ implies that $\Phi_k$ is homotopic to a constant map. Hence, we deduce that $\widetilde{u}_k$ is homotopic to $u_k$, i.e., $\widetilde{u}_k \in \alpha$, and {\bf Step Three} in the proof of Lemma \ref{lem:eps_regularity} holds.

\item Doing the same computation as in {\bf Step Four} in the proof of Lemma \ref{lem:eps_regularity} and using \eqref{ineqy:S-empty:temp0} we obtain
    \begin{align}
   \varliminf_{j\rightarrow \infty} \int_{B_1(p) \setminus B_{1-j^{-1}}(p)} |\Delta^{\frac{m}{2}} \widetilde{u}_{k_j}|^2 dM =0\,.
    \end{align}
Note that the condition \eqref{ineqy:S-empty:temp0} simplifies the computation in {\bf Step four}. Hence, $\eta =0$ on $B_1(p)$ which implies $\nu=0$ on $B_{\frac{1}{2}}(p)$. \qedhere
 \end{enumerate}
\end{proof}

\section{Blowup analysis at the concentration point}\label{sec:blowup}
In this subsection, we will show that there exists at least one non-constant $m$-polyharmonic map $v:\R^{2m} \rightarrow N$ provided that $\mathcal{S}$ is nonempty. Before going further, let us introduce some notations to emphasize the metric of the source manifold: for any closed Riemannian manifold $(M,g)$, we define
\begin{itemize}
\item $B_{r}(p;g)$ : the geodesic open ball of radius $r>0$ centered at $p$ on $(M,g)$;
\item $\B_{r}$ : the Euclidean open ball of radius $r>0$ centered at $0$ in $\R^{2m}$;
\item $\iota_g$ : the injectivity radius of $(M,g)$;
\item $\Phi_{p,g}: \B_{\iota_g} \rightarrow B_{\iota_g}(p;g) \subset (M,g)$ is a geodesic normal coordinate for geodesic ball $B_{\iota_g}(p;g)$ such that $\Phi_{p,g}(0)=p$, and if we write
    \begin{align}\label{nota:g_ij}
    \Phi_{p,g}^*g=g_{ij}(x;p)\, dx^i\otimes dx^j, \quad \forall x \in \B_{\iota_g},
    \end{align}
    where we used $g_{ij}(x;p)$ to emphasize the dependence on $p$ for the geodesic normal coordinates, we have $g_{ij}(0;p)=\delta_{ij}$.
\end{itemize}

If $\mathcal{S}$ is nonempty, let us fix $p^* \in \mathcal{S}$ and focus on the convergence of $u_k$ near the concentration point $p^*$.
Since $\mathcal{S}$ contains at most finitely many points, there exists a geodesic ball $B_{6\tau_0}(p^*;g)$ for some $\tau_0>0$ such that
\begin{align}\label{eqn:p^*}
B_{6\tau_0}(p^*;g) \cap \mathcal{S}=\{ p^*\}.
\end{align}
Now choose $r_k \in (0, \tau_0)$ and $q_k \in \overline{B}_{\tau_0}(p^*;g)$ such that
\begin{align}\label{eqn:rk_qk}
\mu_k \big( B_{3r_k}(q_k;g) \big)
= \sup_{q \in \overline{B}_{\tau_0}(p^*;g)} \mu_k \big( B_{3r_k}(q;g) \big)
=\frac{\epsilon_0}{8}\,,
\end{align}
where $\epsilon_0$ comes from Lemma \ref{lem:eps_regularity}. We claim that
\begin{align}\label{lim:rk_qk}
r_k \rightarrow 0,
\quad
q_k \rightarrow p^*.
\end{align}
In fact, if $r_k \rightarrow 0$ fails, then there exists $r_0>0$ and a subsequence of $\{r_k\}$, also denoted by $r_k$, such that $\varliminf \limits_{k \rightarrow \infty} r_k \geq r_0$. By (\ref{eqn:rk_qk}), we know
\begin{align*}
\mu_k \big(B_{r_0}(q;g) \big) \leq \frac{\epsilon_0}{8}, \quad \forall q \in B_{\tau_0}(p^*;g),
\end{align*}
which implies
\begin{align*}
\mu \big(B_{r_0}(q;g) \big) \leq \frac{\epsilon_0}{8}, \quad \forall q \in B_{\tau_0}(p^*;g).
\end{align*}
It follows from Lemma \ref{lem:eps_regularity} that $u_k$ converges strongly to $u^*$ on $B_{\tau_0}(p^*;g)$, which contradicts the fact that $p^* \in \mathcal{S}$. If $q_k \rightarrow p^*$ fails, the limit of $q_k$ is another concentration point in $\overline{B}_{\tau_0}(p^*;g)$ (see Lemma \ref{lem:support_nu}), which contradicts (\ref{eqn:p^*}).

\begin{remark}
It is well known that the sequence for blowup analysis usually satisfies some kind of equations, such as the minimizers of perturbed functionals used by Sacks and Uhlenbeck \cite{SU0}, the harmonic map heat flow and sequence of harmonic maps. However, in our case, the minimizing sequence $\{u_k\}$ does not have such property. Hence, we need to employ other methods to do the blowup analysis.
Fortunately, the minimizing property in homotopy class and the scaling invariance of the functional $E_m(u)$ are sufficient for us to blowup a non-constant $m$-polyharmonic map from $\R^{2m}$ to $N$.
We also find that doing the local scaling of maps as in the theory of harmonic maps etc., i.e., $\big \{u_k(q_k+r_kx)\big \}$, is not a good choice in our case. Instead, we will consider scaling the whole manifold in the following.
\end{remark}

Now let us recall the scaling for Riemannian manifold $(M,g)$, i.e., $g_{\lambda}=\lambda^2 g$ for $\lambda>0$. For the convenience of the reader, we collect some simple facts about scaling for metric.
\begin{lemma}\label{lem:scaling_property}
Suppose $(M,g)$ is a Riemannian manifold with metric $g$ and denote $g_{\lambda}=\lambda^2 g$ for $\lambda>0$. Then we have
\begin{enumerate}
\item If $B_{r}(p;g)$ is the geodesic open ball of radius $r$ centered at $p$ on $(M,g)$, then $B_{r}(p;g)$ becomes the geodesic open ball of radius $\lambda r$ centered at $p$ on $(M,g_\lambda)$, i.e.,
    \begin{align*}
    B_{r}(p;g)=B_{\lambda r}(p;g_\lambda).
    \end{align*}
\item If the injectivity radius of $(M,g)$ is $\iota_g$, then the injectivity radius of $(M,g_\lambda)$ is $\lambda \iota_g$, i.e.,
    \begin{align*}
    \iota_{g_\lambda}=\lambda\, \iota_g.
    \end{align*}
\item If the geodesic normal coordinate $\Phi_{p,g}: \B_{r} \rightarrow B_{r}(p;g)$ reads
    \begin{align*}
    \Phi_{p,g}^*g=g_{ij}(x;p) \,dx^i\otimes dx^j, \quad \forall x \in \B_{r},
    \end{align*}
    then the geodesic normal coordinate for geodesic ball $B_{r}(p;g)=B_{\lambda r}(p;g_\lambda)  \subset (M,g_\lambda)$ reads
    \begin{align}\label{eqn:coordinate}
    \Phi_{p,g_\lambda}^* g_{\lambda}= g_{ij}(\lambda^{-1}x; p) \,dx^i \otimes dx^j, \quad \forall x \in \B_{\lambda r}.
    \end{align}
    It is clear that $\Phi_{p,g_\lambda}^* g_{\lambda}$ converges to the Euclidean metric on $\R^{2m}$ in $C^\infty_{loc}(\R^{2m})$ as $\lambda \rightarrow \infty$. That is to say, $(M,g_\lambda)$ locally converges to an Euclidean domain as $\lambda$ goes to infinity.
\end{enumerate}
\end{lemma}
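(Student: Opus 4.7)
The plan is to reduce all three assertions to the single observation that the constant conformal rescaling $g_\lambda = \lambda^2 g$ leaves the Christoffel symbols unchanged, hence leaves the geodesic equation, the family of geodesics, and the exponential map unchanged as smooth maps $T_pM \to M$; only the way one measures lengths and norms in $T_pM$ changes by the uniform factor $\lambda$.

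For part (1), I would first note that for any piecewise $C^1$ curve $\gamma$ one has $L_{g_\lambda}(\gamma) = \lambda L_g(\gamma)$ directly from $|\dot\gamma|_{g_\lambda} = \lambda |\dot\gamma|_g$, whence $d_{g_\lambda} = \lambda d_g$ and the ball identity $B_r(p;g) = B_{\lambda r}(p;g_\lambda)$ is immediate. For part (2), since the geodesic equation is preserved, $\exp_p^g = \exp_p^{g_\lambda}$ as smooth maps on $T_pM$; the question then is purely about which radius one prescribes in $T_pM$. Because $|v|_{g_\lambda} = \lambda |v|_g$, the $g_\lambda$-ball of radius $\lambda \iota_g$ in $T_pM$ coincides with the $g$-ball of radius $\iota_g$, so the largest $r$ for which $\exp_p$ is a diffeomorphism on the $g_\lambda$-ball of radius $r$ is exactly $\lambda \iota_g$, giving $\iota_{g_\lambda} = \lambda \iota_g$.

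For part (3), the key step is to fix a $g$-orthonormal frame $\{e_1,\ldots,e_{2m}\}$ of $T_pM$ used in defining $\Phi_{p,g}$ and observe that $\{\lambda^{-1} e_i\}$ is $g_\lambda$-orthonormal. The $g_\lambda$-normal chart therefore takes the explicit form
\begin{align*}
\Phi_{p,g_\lambda}(x) \;=\; \exp_p^{g_\lambda}\!\bigl(\lambda^{-1} x^i e_i\bigr) \;=\; \exp_p^{g}\!\bigl(\lambda^{-1} x^i e_i\bigr) \;=\; \Phi_{p,g}(\lambda^{-1} x).
\end{align*}
Substituting $y=\lambda^{-1} x$ in $\Phi_{p,g}^* g = g_{ij}(y;p)\, dy^i \otimes dy^j$ and multiplying by $\lambda^2$ (to account for $g_\lambda = \lambda^2 g$) reproduces $\Phi_{p,g_\lambda}^* g_\lambda = g_{ij}(\lambda^{-1} x;p)\, dx^i \otimes dx^j$, which is exactly \eqref{eqn:coordinate}. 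The $C^\infty_{loc}$ convergence to the Euclidean metric on $\R^{2m}$ follows because $g_{ij}(\cdot;p)$ is smooth with $g_{ij}(0;p)=\delta_{ij}$: on any compact subset $K \subset \R^{2m}$ and for any multi-index $\alpha$, one has $\partial^\alpha \bigl[g_{ij}(\lambda^{-1}x;p)\bigr] = \lambda^{-|\alpha|}(\partial^\alpha g_{ij})(\lambda^{-1}x;p) \to \partial^\alpha \delta_{ij}$ uniformly on $K$ as $\lambda \to \infty$.

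There is no real obstacle here; the statement is essentially bookkeeping about scaling. The only subtlety to keep straight is that the identification $T_pM \simeq \R^{2m}$ used to define $\Phi_{p,g_\lambda}$ employs a $g_\lambda$-orthonormal basis, not a $g$-orthonormal one, and it is precisely this rescaling of the frame that produces the $\lambda^{-1}$ appearing inside $g_{ij}(\cdot\,;p)$ in \eqref{eqn:coordinate} (rather than, say, a $\lambda$).
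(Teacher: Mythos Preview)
Your proof is correct. The paper itself states this lemma without proof, labeling it as ``some simple facts about scaling for metric'' collected ``for the convenience of the reader,'' so there is nothing to compare against; your argument supplies exactly the elementary details the authors chose to omit.
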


For simplicity, let us write
\begin{align}
g_k:=g_{r_k^{-1}}=r_k^{-2}g.
\end{align}
and denote by $\nabla_k$ the Levi-Civita connection with respect to $g_k$.
For any open set $\Omega \subset M$, we denote by $\|\cdot \|_{W^{k,p}(\Omega;g_k)}$ the $W^{k,p}$-norm with respect to metric $g_k$ on $\Omega$.

\begin{lemma}\label{lem:uk_bound}
For any fixed $R_0 >0$, there exist a positive constant $C$ and a positive integer $N_{R_0}$ such that
\begin{align}
\sup_{k\geq N_{R_0}} \big \|u_k \big \|_{W^{m,2}\big(B_{R_0}(q_k;g_k);g_k \big)} \leq C.
\end{align}
\end{lemma}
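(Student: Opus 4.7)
The plan is to exploit the scaling invariance of the density of $\mu_k$ (equivalently, of the functional $F$ in \eqref{eqn:scal_inv}) together with the local flattening of the rescaled metric $g_k$ in normal coordinates centered at $q_k$.

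First, from the discussion following \eqref{eqn:mimi_value} one has $\|u_k\|_{W^{m,2}(M,g)} \leq C$, and $\|u_k\|_{L^\infty} \leq C$ since $u_k(M) \subset N$ and $N$ is compact. A standard Gagliardo--Nirenberg interpolation (with the $L^\infty$ bound compensating for being on a compact manifold) then gives, uniformly in $k$,
\[
\int_M |\nabla^i u_k|^{\frac{2m}{i}}\, dM_g \leq C, \quad 1 \leq i \leq m\,,
\]
that is, $\mu_k(M) \leq C$. Because each integrand in the density of $\mu_k$ is scaling invariant in the critical dimension $2m$ (this is \eqref{eqn:scal_inv} applied to $v = u_k$ with $\lambda = r_k^{-1}$), the same bound holds after passing to the rescaled metric:
\[
\int_M |\nabla_k^i u_k|_{g_k}^{\frac{2m}{i}}\, dM_{g_k} \leq C, \quad 1 \leq i \leq m.
\]

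Next, I would control the volume of $B_{R_0}(q_k; g_k)$. Choose $N_{R_0}$ large enough that $k \geq N_{R_0}$ implies $\iota_{g_k} = r_k^{-1}\iota_g > R_0$. Then $\Phi_{q_k, g_k}: \B_{R_0} \to B_{R_0}(q_k; g_k)$ is a well-defined normal coordinate chart and, by \eqref{eqn:coordinate}, the pulled-back metric is $g_{ij}(r_k x; q_k)\, dx^i \otimes dx^j$. Since $r_k \to 0$ and $g_{ij}(0; q_k) = \delta_{ij}$, this metric tends uniformly on $\B_{R_0}$ to the Euclidean metric. In particular, $\mathrm{Vol}(B_{R_0}(q_k; g_k); g_k) \leq C(R_0)$ for $k \geq N_{R_0}$.

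With these two ingredients, Hölder's inequality on $B_{R_0}(q_k; g_k)$ (using $2 \leq 2m/i$ for $1 \leq i \leq m$) yields
\[
\|\nabla_k^i u_k\|_{L^2(B_{R_0}(q_k; g_k); g_k)}
\leq \mathrm{Vol}\bigl(B_{R_0}(q_k; g_k); g_k\bigr)^{\frac{1}{2} - \frac{i}{2m}} \|\nabla_k^i u_k\|_{L^{2m/i}(B_{R_0}(q_k; g_k); g_k)} \leq C(R_0),
\]
while the $i = 0$ term is bounded by $\|u_k\|_{L^\infty}\, \mathrm{Vol}(B_{R_0}(q_k;g_k);g_k)^{1/2} \leq C(R_0)$. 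Summing over $0 \leq i \leq m$ gives the asserted uniform $W^{m,2}$ estimate. I do not foresee a serious obstacle: the main conceptual point is the scaling invariance \eqref{eqn:scal_inv}, and the rest is a standard combination of Gagliardo--Nirenberg, Hölder's inequality, and the fact that the rescaled metric flattens on any fixed-size normal-coordinate ball.
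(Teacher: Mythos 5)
Your proof is correct, and it takes a genuinely different route from the paper. The paper proceeds by directly rescaling the individual $L^2$ norms of $\nabla^l u_k$ via the exact identity $\|\nabla^l_k f\|_{L^2(\Omega;g_k)} = r_k^{l-m}\|\nabla^l f\|_{L^2(\Omega;g)}$, handling the top order ($l=m$) and bottom order ($l=0$) by this scaling together with the global $W^{m,2}$ bound and the $L^\infty$ bound, and then invoking a \emph{local} Gagliardo--Nirenberg inequality on the small balls $B_{R_0 r_k}(q_k;g)$, stated with the correct powers of the radius, to control the intermediate derivatives $1\leq l\leq m-1$. Your argument instead establishes a global bound on the scale-invariant quantity $\mu_k(M)$ via Gagliardo--Nirenberg on the closed manifold (which is consistent with the paper's own assertion in \eqref{eqn:measure1} that $\mu_k$ is totally bounded), then transfers this bound to the rescaled metric using the invariance \eqref{eqn:scal_inv}, and finishes by H\"older on the ball of bounded volume. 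Both rely on interpolation and on the critical scaling in dimension $2m$; your version is arguably cleaner in that it works directly with the scale-invariant density that the paper already treats as its central object and avoids having to calibrate the radius powers in a localized interpolation inequality, while the paper's version is more elementary and self-contained in that it never needs the $L^{2m/i}$ bounds at all.
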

\begin{proof}
By Lemma \ref{lem:scaling_property}, we know the injective radius of $(M,g_k)$ is $r_k^{-1} \iota_g$, where $\iota_g$ is the injective radius of $(M,g)$. Due to \eqref{lim:rk_qk}, there exists a positive integer $N_{R_0}$ such that $R_0 <\frac{1}{2} r_k^{-1} \iota_g$ for all $k \geq N_{R_0}$. Hence, the geodesic ball $B_{R_0}(q_k;g_k)$ is well-defined on $(M,g_k)$ for all $k \geq N_{R_0}$.

By a direct calculation, for any $f \in C^\infty(M)$, open set $\Omega \subset M$ and $0 \leq l \leq m$, we have
\begin{align}\label{eqn:f:temp1}
\big \|\nabla^l_k f \big \|_{L^2 \big(\Omega;g_k \big)}
= r_k^{l-m} \big \|\nabla^l f \big \|_{L^2 \big(\Omega;g \big)}, \quad \forall \, k \in \N^+,
\end{align}
which immediately implies that for $k\geq N_0$ there hold
\begin{align}
\big \|\nabla^m_k u_k \big \|_{L^2 \big(B_{R_0}(q_k;g_k);g_k \big)}
&= \big \|\nabla^m u_k \big \|_{L^2 \big(B_{R_0}(q_k;g_k);g \big)}
\leq \|\nabla^m u_k \big \|_{L^2 \big(M;g \big)}\,, \label{eqn:uk:temp1} \\
\big \| u_k \big \|_{L^2 \big(B_{R_0}(q_k;g_k);g_k \big)}
&= r_k^{-m} \| u_k \big \|_{L^2 \big(B_{R_0}(q_k;g_k);g \big)}
= r_k^{-m} \| u_k \big \|_{L^2 \big(B_{R_0r_k}(q_k;g);g \big)} \nonumber \\
& \leq C R_0^m \| u_k \big \|_{L^\infty \big(M;g \big)}\,, \label{eqn:uk:temp2}
\end{align}
where $C$ is a positive constant only dependent of $(M,g)$.

By Gagliardo-Nirenberg interpolation inequality, for all
$f \in C^\infty(M)$, $1\leq l \leq m-1$, $q \in M$, $r \in (0, \frac{1}{2}\iota_g)$ we have
\begin{align}\label{eqn:f:temp2}
r^{l-m}\big \|\nabla^l f \big \|_{L^2 \big( B_r(q;g);g \big)}
\leq C \left(
\big \|\nabla^m f \big \|_{L^2 \big( B_r(q;g);g \big)}
+ r^{-m}\big \| f \big \|_{L^2 \big( B_r(q;g);g \big)}
\right)\,,
\end{align}
where $C$ is a positive constant only depending on $m,l$ and $g$.
Combining (\ref{eqn:f:temp1}) and (\ref{eqn:f:temp2}), for all $1\leq l \leq m-1$, $k \geq N_{R_0}$ we have
\begin{align}
& R_0^{l-m} \big \|\nabla^l_k u_k \big \|_{L^2 \big(B_{R_0}(q_k;g_k);g_k \big)} \notag \\
=& (R_0r_k) ^{l-m} \big \|\nabla^l u_k \big \|_{L^2 \big(B_{R_0}(q_k;g_k);g \big)} \notag \\
=& (R_0r_k) ^{l-m} \big \|\nabla^l u_k \big \|_{L^2 \big(B_{R_0 r_k}(q_k;g);g \big)} \notag \\
\leq & \,C \left(
\big \|\nabla^m u_k \big \|_{L^2 \big(B_{R_0 r_k}(q_k;g);g \big)}
+ (R_0 r_k)^{-m}\big \| u_k \big \|_{L^2 \big(B_{R_0 r_k}(q_k;g);g \big)}\right) \notag \\
\leq & \,C \left(
\big \|\nabla^m u_k \big \|_{L^2 \big(M;g \big)}
+ \big \| u_k \big \|_{L^\infty \big(M;g \big)}
\right)\,. \label{eqn:uk:temp3}
\end{align}
Hence, combining (\ref{eqn:uk:temp1}), (\ref{eqn:uk:temp2}) and (\ref{eqn:uk:temp3}) yields
\begin{align*}
\sup_{k \geq N_{R_0}} \big \|u_k \big \|_{W^{m,2}\big(B_{R_0}(q_k;g_k);g_k \big)} \leq C <\infty,
\end{align*}
where $C>0$ is only dependent on $R_0, m, g$ and the boundedness of $\{\|u_k \|_{W^{m,2}(M,g)} \}$. The proof is completed.
\end{proof}

For any $R_0>0$ fixed, let us choose the geodesic normal coordinate of $(B_{R_0}(q_k;g_k), g_k)$, i.e.,
\begin{align*}
\Phi_{q_k, g_k}: \B_{R_0} \rightarrow B_{R_0}(q_k;g_k).
\end{align*}
By Lemma \ref{lem:scaling_property}, we know that $\Phi_{q_k, g_k}^* g_k$ converges to the Euclidean metric on $\B_{R_0}$  in $C^\infty$ as $k \to \infty$, i.e., if we denote by $\g= \sum_{i} dx^i \otimes dx^i$ the Euclidean metric on $\R^{2m}$, then we have that, for all $l \in \N$, there holds
\begin{align}\label{eqn:metric}
\lim_{k\rightarrow \infty} \| \Phi_{q_k, g_k}^* g_k -\g \|_{C^l(\B_{R_0})}=0.
\end{align}
Then we consider the following sequence
\begin{align}
u_k(x):=u_k\circ \Phi_{q_k, g_k}(x) : \B_{R_0} \rightarrow N.
\end{align}
It follows from (\ref{eqn:metric}) that there exist a positive constant $C>1$ and a positive integer $N_1>N_{R_0}$ (see Lemma \ref{lem:uk_bound}) such that for all $k>N_1$  there holds
\begin{align*}
\frac{1}{C} \| u_k \|_{W^{m,2} \big( B_{R_0}(q_k;g_k);g_k \big)}
\leq \| u_k(x) \|_{W^{m,2} \big(\B_{R_0};\g \big)}
\leq C \| u_k \|_{W^{m,2} \big( B_{R_0}(q_k;g_k);g_k \big)}\,.
\end{align*}
By Lemma \ref{lem:uk_bound}, we know
\begin{align}\label{ineqy:uk_bound}
\sup_{k\geq N_1} \| u_k(x) \|_{W^{m,2} \big(\B_{R_0};\g \big)} \leq C <\infty.
\end{align}
By the arbitrariness of $R_0$ and a diagonal process, we have the following lemma.
\begin{lemma}
There exists a subsequence of $\{u_k(x)\}$, still denoted by $\{u_k(x)\}$, and $v(x) \in W^{m,2}_{loc}(\R^{2m},N)$ such that for any $R_0>0$, $u_k(x)$ converges to $v(x)$ strongly in $W^{m-1,2}(\B_{R_0};\g)$ and weakly in $W^{m,2}(\B_{R_0};\g)$, i.e.,
\begin{align*}
u_k(x) &\rightarrow v(x)\quad \text{in}\,\, W^{m-1,2}_{loc}(\R^{2m};\g),\\
u_k(x) &\rightharpoonup v(x)\quad \text{in}\,\, W^{m,2}_{loc}(\R^{2m};\g).
\end{align*}
\end{lemma}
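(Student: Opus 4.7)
The plan is to extract the subsequence by combining the uniform bound \eqref{ineqy:uk_bound} with a standard reflexivity/compactness argument, then promote local convergence on each ball to local convergence on all of $\R^{2m}$ via a diagonal extraction.

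First I would fix $R_0 > 0$. By \eqref{ineqy:uk_bound}, the sequence $\{u_k(x)\}_{k\geq N_1}$ is bounded in the Hilbert space $W^{m,2}(\B_{R_0};\g)$. Since this space is reflexive, the Banach–Alaoglu theorem yields a subsequence, which I will still call $\{u_k(x)\}$, and a map $v_{R_0} \in W^{m,2}(\B_{R_0};\R^K)$ such that $u_k \rightharpoonup v_{R_0}$ weakly in $W^{m,2}(\B_{R_0};\g)$. Next I apply the Rellich–Kondrachov compactness theorem: since $\B_{R_0}$ is bounded with smooth (Lipschitz) boundary, the embedding $W^{m,2}(\B_{R_0}) \hookrightarrow W^{m-1,2}(\B_{R_0})$ is compact, and hence along a further subsequence $u_k \to v_{R_0}$ strongly in $W^{m-1,2}(\B_{R_0};\g)$. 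By passing to yet another subsequence if necessary, I may also assume pointwise a.e.\ convergence $u_k(x) \to v_{R_0}(x)$ on $\B_{R_0}$; since $u_k(x) \in N$ a.e.\ and $N$ is closed in $\R^K$, this forces $v_{R_0}(x) \in N$ a.e., so that $v_{R_0} \in W^{m,2}(\B_{R_0}; N)$.

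To get a single limit defined on all of $\R^{2m}$, I apply the above construction successively on the exhausting family $\B_{1}, \B_{2}, \B_{3}, \dots$ and run the classical Cantor diagonal argument: from the original sequence, extract a subsequence converging weakly in $W^{m,2}(\B_1)$ and strongly in $W^{m-1,2}(\B_1)$ to some $v^{(1)}$; from this, extract a further subsequence doing the same on $\B_2$ with limit $v^{(2)}$; and so on. The uniqueness of weak limits implies $v^{(\ell+1)}|_{\B_\ell} = v^{(\ell)}$, so the maps fit together into a single $v \in W^{m,2}_{\mathrm{loc}}(\R^{2m}, N)$. The diagonal subsequence then converges to $v$ strongly in $W^{m-1,2}(\B_\ell;\g)$ and weakly in $W^{m,2}(\B_\ell;\g)$ for every $\ell \in \N^+$. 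Finally, for any $R_0 > 0$, choosing $\ell \geq R_0$ and restricting to $\B_{R_0} \subset \B_\ell$ yields the desired convergence, since restriction is continuous for both the weak $W^{m,2}$ topology and the strong $W^{m-1,2}$ topology.

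Essentially every step is standard; there is no genuine obstacle. The only point requiring a touch of care is ensuring that the limit takes values in $N$, which I handle via pointwise-a.e.\ extraction together with the closedness of $N \subset \R^K$. The diagonal extraction is entirely routine, but I would note it explicitly to justify the single notation ``$u_k(x)$'' used for the final subsequence in the statement of the lemma.
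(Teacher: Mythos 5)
Your proof is correct and takes essentially the same route the paper intends: the paper states only that the lemma follows ``by the arbitrariness of $R_0$ and a diagonal process'' from the uniform bound \eqref{ineqy:uk_bound}, and you have supplied exactly the standard filling-in (Hilbert-space weak compactness, Rellich--Kondrachov for the strong $W^{m-1,2}$ convergence, a.e.\ convergence plus closedness of $N$ to keep the limit in $N$, and Cantor diagonalization over an exhaustion by balls).
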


In what follows, we will prove that $v(x)$ is a non-constant $m$-polyharmonic map from $\R^{2m}$ into $N$.
To begin with, it is easy to verify that measures $\mu_k$, $\xi_k$ (see (\ref{eqn:measure1})) on $M$ satisfy
\begin{align}
\mu_k &= \bigg(  \sum_{i=1}^m |\nabla_k^i u_k|^{\frac{2m}{i}}  \bigg) dM_{g_k}, \\
\xi_k &= \, \big|\Delta_k^{\frac{m}{2}} u_k \big|^2 dM_{g_k}.
\end{align}
Now for any large $R_0>0$ fixed, we only consider the measures $\mu_k, \xi_k$ defined on geodesic ball $B_{R_0}(q_k,g_k)$. If we take the geodesic normal coordinate of $(B_{R_0}(q_k,g_k), g_k)$, we can obtain a sequence of Radon measures on Euclidean ball $\B_{R_0}$, i.e.,
\begin{align*}
\mu_k &=\bigg(  \sum_{i=1}^m |\nabla_k^i u_k|^{\frac{2m}{i}} (x)  \bigg) \sqrt{g_k}(x) dx, \\
\xi_k &=\, \big|\Delta_k^{\frac{m}{2}} u_k \big|^2(x) \sqrt{g_k}(x) dx.
\end{align*}
Hence, we can treat $\mu_k$ and $\xi_k$ as the Radon measures on both $M$ and $\B_{R_0}$.

By \eqref{ineqy:uk_bound}, we can assume, by passing to a further subsequence if necessary, $\mu_k$ and $\xi_k$ weakly converge to Radon measures $\widetilde{\mu}$ and $\widetilde{\xi}$ on $\B_{R_0}$ respectively. It follows from (\ref{eqn:metric}) and Fatou's lemma that there exist Radon measures $\widetilde{\nu}$ and $\widetilde{\eta}$ on $\B_{R_0}$ such that
\begin{align}
\widetilde{\mu}=\widetilde{\nu}+ \widetilde{\mu}^* \quad,
\widetilde{\xi}=\widetilde{\eta}+ \widetilde{\xi}^*
\end{align}
where
\begin{align*}
\widetilde{\mu}^* &= \bigg(  \sum_{i=1}^m |D^i v|^{\frac{2m}{i}}(x)  \bigg) dx, \\
\widetilde{\xi}^* &= \, \big|\Delta^{\frac{m}{2}} v(x) \big|^2 dx.
\end{align*}
where $D, \Delta$, $dx$ denote the ordinary derivative, Laplacian and the standard Euclidean measure respectively.

\begin{lemma}\label{lem:eps_regu_2}
There exists a positive constant $\epsilon_0$ such that for any $x_0 \in \B_{\frac{1}{2}R_0}$, if
\begin{align*}
\widetilde{\mu}(\B_2(x_0)) \leq \epsilon_0,
\end{align*}
then $\widetilde{\nu} \equiv 0$ in $\B_1(x_0)$.
\end{lemma}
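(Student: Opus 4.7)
The plan is to mirror the proof of Lemma \ref{lem:eps_regularity}, transferred to the rescaled setting on $\B_{R_0}$ via the coordinates $\Phi_{q_k,g_k}$. The key enabling facts are that the energies $E_m(\cdot,g)$ and $F(\cdot,g)$ in dimension $2m$ are scale-invariant (see \eqref{eqn:scal_inv}), and that $\Phi_{q_k,g_k}^{*}g_k$ converges to the Euclidean metric $\g$ in $C^{\infty}_{\mathrm{loc}}$ by \eqref{eqn:metric}. Consequently, analytic estimates performed in the $g_k$-geometry on any fixed compact subset of $\B_{R_0}$ are asymptotically Euclidean, with Poincar\'e, Gagliardo--Nirenberg and Sobolev constants uniformly bounded.

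First I would fix a sufficiently large $j$ and repeat Steps One and Two of Lemma \ref{lem:eps_regularity} verbatim, now applied to the pulled-back maps $u_k\circ\Phi_{q_k,g_k}$ and $v$ on $\B_2(x_0)$. The hypothesis $\widetilde{\mu}(\B_2(x_0))\leq\epsilon_0$, combined with the weak convergence $\mu_k\rightharpoonup\widetilde{\mu}$ and $W^{m-1,2}_{\mathrm{loc}}$-strong convergence $u_k\to v$, supplies the small-energy input: $\limsup_k \mu_k(\overline{\B}_{3/2}(x_0))\leq\epsilon_0$ and $\int_{\B_{3/2}(x_0)}|Dv|^{2m}\,dx\leq\epsilon_0$ by Fatou. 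The same cut-off $\psi_j$ and radially varying mollification radius $\rho$ from \eqref{eqn:psi_j}--\eqref{cond:rho2} then yield averaged functions $u_{k,j,\rho}$ with $\mathrm{dist}(u_{k,j,\rho},N)\leq\sigma_0/2$ a.e. for $k$ large. Projecting by $\pi$ and pulling back to $M$ produces $\widetilde{u}_k\in W^{m,2}(M,N)$ that equals $v\circ\Phi_{q_k,g_k}^{-1}$ on $\Phi_{q_k,g_k}(\B_{1-j^{-1}}(x_0))$ and equals $u_k$ on $M\setminus\Phi_{q_k,g_k}(\B_1(x_0))$.

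Next, I would verify $\widetilde{u}_k\sim u_k$ in $W^{m,2}(M,N)$ via Corollary \ref{coro:W1n_homotopic}. The difference is supported in $\Phi_{q_k,g_k}(\B_1(x_0))$, whose $g$-diameter is $\lesssim r_k\to 0$, so $\|\widetilde{u}_k-u_k\|_{L^{2m}(M;g)}\lesssim r_k\to 0$ since $N$ is bounded. The scale-invariance $\|\nabla f\|_{L^{2m}(M;g)}=\|\nabla_k f\|_{L^{2m}(M;g_k)}$ lets me run Step Three in the $g_k$-geometry on $\B_2(x_0)$, bounding $\|\nabla_k(\widetilde{u}_k-u_k)\|_{L^{2m}}$ by $C(\epsilon_0^{1/2m}+j\|u_k\circ\Phi_{q_k,g_k}-v\|_{L^{2m}(\B_{3/2}(x_0))})$, which is $\leq\varepsilon_0$ for $\epsilon_0$ small and $k_j$ large. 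Then minimality of $\{u_k\}$ gives $E_m(u_{k_j})\leq E_m(\widetilde{u}_{k_j})+\epsilon$, and by scale-invariance this inequality transfers to $\B_2(x_0)$ in the $g_k$-geometry; repeating the chain of Young-inequality estimates from Step Four yields
\[
\widetilde{\eta}(\B_1(x_0))\leq C\liminf_{j\to\infty}\mu_{k_j}(U_{\bar\rho}),
\]
where $U_{\bar\rho}$ shrinks to $\partial\B_1(x_0)$. Choosing $r_0\in(1,5/4)$ with $\widetilde{\nu}(\partial\B_{r_0}(x_0))=0$ (possible since $\widetilde{\nu}$ is finite), combined with the Euclidean analogue of Remark \ref{rem:measures}, yields $\widetilde{\nu}(\B_1(x_0))=0$.

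The main obstacle will be Step Two: one must verify, uniformly in $k$, that the Poincar\'e-type inequality on $g_k$-geodesic balls inside $\Phi_{q_k,g_k}(\B_2(x_0))$ has constants converging to the Euclidean ones, so that the averaged $u^*_{\rho}$ and $u_{k,\rho}$ land within the tubular neighborhood $N_{\sigma_0/2}$. This follows from \eqref{eqn:metric}, but must be tracked carefully together with the replacement of $u^*$ by $v$ in the interior-annulus $L^2$-closeness argument, where one exploits the diagonally chosen subsequence with $u_k\circ\Phi_{q_k,g_k}\to v$ in $W^{m-1,2}_{\mathrm{loc}}(\R^{2m})$.
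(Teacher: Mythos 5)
Your proposal follows essentially the same route as the paper: transfer the argument of Lemma \ref{lem:eps_regularity} to the rescaled setting using the scale invariance of $E_m$ and of the $W^{1,2m}$-norm in dimension $2m$, together with the $C^\infty_{\mathrm{loc}}$ convergence \eqref{eqn:metric} of $\Phi_{q_k,g_k}^*g_k$ to the Euclidean metric, which keeps the Poincar\'e/Gagliardo--Nirenberg constants uniform and yields the analogue of Lemma \ref{lem:equi_defect} for $\widetilde\nu,\widetilde\eta$. The paper's proof is just a terse version of exactly this; your extra observations (e.g.\ $\|\widetilde{u}_k-u_k\|_{L^{2m}(M;g)}\lesssim r_k\to 0$ since the modification is supported in a ball of $g$-diameter $O(r_k)$) are consistent fillings-in rather than a different strategy.
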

\begin{proof}
Firstly, let us introduce the temporary notation $W^{m,2}_g(M,N)$ in place of $W^{m,2}(M,N)$ to emphasize the metric $g$. It follows from (\ref{eqn:f:temp1}) that for any given $k \in \N^+$, there exists a positive constant $C_k>1$ such that for any $f\in C^{\infty}(M,N)$, there holds
\begin{align}
\frac{1}{C_k} \|f \|_{W^{m,2}_{g_k}(M,N)}
\leq  \|f \|_{W^{m,2}_g(M,N)}
\leq C_k  \|f \|_{W^{m,2}_{g_k}(M,N)},
\end{align}
which implies that the critical Sobolev space $W^{m,2}_g(M,N)$ is equivalent to $W^{m,2}_{g_k}(M,N)$ by Theorem \ref{thm:density}. Hence, the homotopy classes in both critical Sobolev spaces are the same.

Secondly, it follows from the scaling invariance of the functional $E_m(u)$ (see (\ref{eqn:scal_inv})) that for any $k \in \N^+$, there holds
\begin{align}\label{eqn:same_inf}
\inf \{E_m(u,g) \,|\, u \in  \alpha \cap W^{m,2}_g (M,N)\}
= \inf \{E_m(u,g_k) \,|\, u \in  \alpha \cap W^{m,2}_{g_k} (M,N)\}\,,
\end{align}
where $\alpha$ denotes the homotopy class in $C(M,N)$ and
\begin{align*}
E_m(v,g_k)=\int_M | \Delta_k^{\frac{m}{2}} v |^2 dM_{g_k}
\end{align*}
where $\nabla_k$ and $\Delta_k$ are the Levi-Civita connection and Laplace-Beltrami operator on $(M, g_k)$ respectively.

Thirdly, (\ref{eqn:metric}) ensures that the following three results hold, which is analogous to Lemma \ref{lem:equi_defect}. For any open set $\Omega \subset \B_{R_0}$, there holds
\begin{enumerate}
\item $u_k(x)$ converges to $v(x)$ strongly in $W^{m,2}(\Omega; \g)$ if and only if $\widetilde{\nu} \equiv 0$ in $\Omega$.
\item If $u_k(x)$ converges to $v(x)$ strongly in $W^{m,2}(\Omega; \g)$, then $\widetilde{\eta} \equiv 0$ in $\Omega$.
\item If $\widetilde{\eta} \equiv 0$ in $\Omega$, then for any $V \subset \subset \Omega$,  $u_k(x)$ converges to $v(x)$ strongly in $W^{m,2}(V; \g)$ and hence $\widetilde{\nu} \equiv 0$ in $V$.
\end{enumerate}
With these useful results at hand, the proof of this lemma is similar to that of Lemma \ref{lem:eps_regularity}. The details are left to the reader.
\end{proof}

Since $r_k \rightarrow 0$ and $q_k \rightarrow p^*$, for any given $R_0>0$ we have that $B_{R_0}(q_k;g_k)=B_{R_0r_k}(q_k;g)$\, $\subset B_{\tau_0}(p^*;g)$ for any sufficiently large $k$, where $\tau_0$ is from \eqref{eqn:p^*}.  By (\ref{eqn:rk_qk}), we have that for any geodesic ball $B_{3r_k}(q;g) \subset B_{R_0r_k}(q_k;g)$, there holds
\begin{align*}
\mu_k \big( B_{3r_k}(q;g) \big)
= \mu_k \big( B_{3}(q;g_k) \big)
\leq \frac{\epsilon_0}{8}.
\end{align*}
By (\ref{eqn:metric}), there exists $L>0$ independent of $q$ such that for all $k>L$ there holds
\begin{align*}
\B_{2}(x_q) \subset \Phi_{q_k, g_k}^{-1} \big( B_{3}(q;g_k) \big)
\end{align*}
where $x_q= \Phi_{q_k, g_k}^{-1}(q) \in \B_{R_0}$. Hence, we obtain
\begin{align}
\widetilde{\mu} \big( \B_{2}(x) \big) \leq \frac{\epsilon_0}{8},
\quad \forall \,x \in \B_{\frac{R_0}{2}}.
\end{align}
This ensures that $u_k(x)$ converges strongly to $v(x)$ in $W^{m,2} \big(\B_{\frac{1}{2}R_0};\g \big)$
by Lemma \ref{lem:eps_regu_2}. By the arbitrariness of $R_0$ and a diagonal process, we know
\begin{align}\label{eqn:uk_v_strong}
u_k(x) \rightarrow v(x)
\quad \mbox{in}\,\, W^{m,2}_{loc}(\R^{2m};\g).
\end{align}
On account of (\ref{eqn:rk_qk}), we know
\begin{align}
\widetilde{\mu}^*(\B_3)= \lim_{k \rightarrow \infty} \mu_k \big(\B_3 \big)
=\lim_{k \rightarrow \infty} \mu_k \big( B_{3}(q_k;g_k) \big)
=\lim_{k \rightarrow \infty} \mu_k \big( B_{3r_k}(q_k;g) \big)
=\frac{\epsilon_0}{8}>0\,,
\end{align}
which implies that $v(x)$ is a non-constant map from $\R^{2m}$ to $N$. Moreover, we can prove that $v(x)$ is a smooth $m$-polyharmonic map from $\R^{2m}$ to $N$.

\begin{theorem}
$v(x)$ is a smooth non-constant $m$-polyharmonic map from $\R^{2m}$ to $N$.
\end{theorem}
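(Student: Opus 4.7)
The plan is to prove two things in order: (i) $v$ is a weakly $m$-polyharmonic map from $\R^{2m}$ into $N$, and then (ii) invoke the regularity theorem of Gastel--Scheven \cite{GS} to upgrade $v$ to $C^\infty$. Non-constancy has already been established just before the theorem (from $\widetilde{\mu}^*(\B_3)\ge\epsilon_0/8>0$), so only smoothness and the Euler--Lagrange equation remain.

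For step (i) my plan is to transplant the first-variation argument of Lemma~\ref{lem:u*smooth} into the rescaled setting. Fix $\chi\in C_0^\infty(\R^{2m},\R^K)$, say with support in $\B_R$. For $k$ large enough, the normal chart $\Phi_{q_k,g_k}$ is defined on $\B_R$, and I push $\chi$ forward to a smooth compactly supported vector field on $M$ supported in $B_R(q_k;g_k)$, still denoted by $\chi$. I set
\begin{align*}
u_k^t(x):=\pi\bigl(u_k(x)+t\,\chi(x)\bigr),\qquad |t|<\delta_0,
\end{align*}
with $\delta_0$ uniform in $k$ coming from the uniform tubular neighborhood $N_{\sigma_0}$. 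Then $u_k^t\in C^\infty(M,N)$ agrees with $u_k$ outside $B_R(q_k;g_k)$, and for $|t|$ small is homotopic to $u_k$ via the straight-line path inside $N_{\sigma_0}$; hence $u_k^t\in\alpha$. Since $\{u_k\}$ is a minimizing sequence in $\alpha$, there exists $\epsilon_k\to 0$ with $E_m(u_k,g)\le E_m(u_k^t,g)+\epsilon_k$, and by the scaling invariance \eqref{eqn:scal_inv} the same inequality holds for $g_k=r_k^{-2}g$. Because $u_k^t=u_k$ off the chart, this reduces to
\begin{align*}
\int_{\B_R}\bigl|\Delta_{g_k}^{m/2}u_k^t\bigr|^2\sqrt{g_k}\,dx\ \ge\ \int_{\B_R}\bigl|\Delta_{g_k}^{m/2}u_k\bigr|^2\sqrt{g_k}\,dx-\epsilon_k,
\end{align*}
with everything expressed in the normal chart. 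Using the $C^\infty(\B_R)$ convergence $\Phi_{q_k,g_k}^*g_k\to\g$ from \eqref{eqn:metric} together with the strong $W^{m,2}_{\mathrm{loc}}$ convergence of \eqref{eqn:uk_v_strong}, I pass to the limit $k\to\infty$ to obtain
\begin{align*}
\int_{\B_R}\bigl|\Delta^{m/2}v^t\bigr|^2\,dx\ \ge\ \int_{\B_R}\bigl|\Delta^{m/2}v\bigr|^2\,dx,\qquad v^t(x):=\pi\bigl(v(x)+t\,\chi(x)\bigr),
\end{align*}
for every $|t|<\delta_0$. Differentiating at $t=0$ (using the same pointwise chain-rule expansion exhibited in \eqref{eqn:weaksol_temp3}) yields the weak Euler--Lagrange identity for $E_m$ tested against $\chi$. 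As $\chi\in C_0^\infty(\R^{2m},\R^K)$ is arbitrary, $v$ is weakly $m$-polyharmonic on $\R^{2m}$, and step (ii) then follows at once from \cite{GS} on each $\B_R$.

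The main obstacle is step (i), and specifically the limit-passage inside the chart: I must simultaneously control (a) the metric error $g_k\neq\g$, absorbed by \eqref{eqn:metric}; (b) the almost-minimizing defect $\epsilon_k\to 0$, handled by taking the asymptotic inequality in $k$ before differentiating in $t$; and (c) the requirement that the convergence $u_k\to v$ be \emph{strong} — not merely weak — in $W^{m,2}_{\mathrm{loc}}$, so that the quadratic integrand $|\Delta^{m/2}(\cdot)|^2$ and its $t$-derivative pass to the limit without producing an additional defect. Point (c) is precisely what the rescaled $\epsilon$-regularity Lemma~\ref{lem:eps_regu_2} secured via \eqref{eqn:uk_v_strong}, so the three issues dovetail and the argument closes.
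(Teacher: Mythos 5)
Your proof is correct, and your ingredients coincide with the paper's: transplant $\chi\in C_0^\infty(\R^{2m},\R^K)$ onto $(M,g_k)$ via the normal chart, perturb by $u_k^t=\pi(u_k+t\chi)$, invoke the minimizing property together with the scaling invariance \eqref{eqn:scal_inv}, localize to the chart using that $u_k^t\equiv u_k$ off it, pass to the limit with the metric convergence \eqref{eqn:metric} and the strong convergence \eqref{eqn:uk_v_strong} delivered by the rescaled $\epsilon$-regularity, and finish with Gastel--Scheven \cite{GS}. Where you differ from the paper is the order of the two limits. The paper Taylor-expands
$E_m(u_k^t,g_k)=E_m(u_k,g_k)+2t\,\mathrm{FV}_k+O_k(t^2)$
with a remainder bounded uniformly in $k$, deduces $\mathrm{FV}_k\to 0$ from almost-minimality, and then passes the specific first-variation integral $\mathrm{FV}_k$ to its limit; you instead hold $t$ fixed, pass the almost-minimality inequality to the limit in $k$ to obtain $E_m(v^t)\ge E_m(v)$ on $\B_R$, and then differentiate at $t=0$. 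Both orderings close. The paper's route reduces the limit-passage to one concrete linear functional $\mathrm{FV}_k\to\mathrm{FV}_\infty$; yours is perhaps cleaner to state but requires verifying the full energy convergence $E_m(u_k^t)\to E_m(v^t)$ for each $t$, which rests on continuity of $u\mapsto\pi(u+t\chi)$ in $W^{m,2}$ under a uniform $L^\infty$ bound (Fa\`a di Bruno together with Gagliardo--Nirenberg), the same composition estimates the paper already carries out in Lemma~\ref{lem:u*smooth}. In either case, the three issues you flag at the end — metric error, almost-minimality defect, and strength of convergence — are precisely the points the paper's proof addresses, and your argument is sound.
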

\begin{proof}
Since the proof is similar to that of Lemma \ref{lem:u*smooth}, we shall only briefly outline the necessary changes.
For any $\chi \in C^\infty_0 (\R^{2m}, \R^K)$, there exists sufficiently large $R_0>0$ such that $\mbox{supp}\, \chi \subset \B_{\frac{1}{2}R_0}$.
Then for sufficiently large $k$, we can give the following well-defined smooth map on $(M,g_k)$:
\begin{align}
\chi_k(q)= \left\{
\begin{aligned}
 &\chi \big( \Phi_{q_k,g_k}^{-1}(q) \big), && \forall \,q \in B_{R_0}(q_k,g_k)=B_{R_0r_k}(q_k,g), \\
 &0, && \mbox{otherwise}.
\end{aligned}
\right.
\end{align}
Note that the support of $\chi_k$ is contained in $B_{\frac{1}{2}R_0}(q_k,g_k)=B_{\frac{1}{2}R_0r_k}(q_k,g)$.
Now we define
\begin{align}
u_k^t(q):= \pi \bigg( u_k(q) + t \chi_k(q) \bigg), \quad t \in (-\delta_0, \delta_0)\,, \, q\in (M,g_k)\,,
\end{align}
where $\delta_0>0$ is sufficiently small and independent of $k$ and $\pi$ is the nearest point projection. It is clear that $u_k^t \in C^{\infty}(M,N)$ is homotopic to $u_k$, and thus $u_k^t \in \alpha$. We compute
\begin{align*}
E_m(u_k^t, g_k)
&= \int_M \big|\Delta_k^{\frac{m}{2}} u_k^t\big|^2 dM_{g_k} \\
&= E_m(u_k, g_k)
+ 2 t \int_M \big \langle \Delta_k^{\frac{m}{2}}u_k, \Delta_k^{\frac{m}{2}} \big( D\pi(u_k) \chi \big) \big\rangle dM_{g_k} + O_k(t^2) \\
&= E_m(u_k, g)
+ 2 t \int_M \big \langle \Delta_k^{\frac{m}{2}}u_k, \Delta_k^{\frac{m}{2}} \big( D\pi(u_k) \chi \big) \big\rangle dM_{g_k} + O_k(t^2)\,.
\end{align*}
By estimating the second derivative of $E_m(u_k^t, g_k)$ with respect to $t$, we have $|O_k(t^2)| \leq C t^2$ where $C>0$ is independent of $k$. It follows from (\ref{eqn:same_inf}) that by a similar argument to the proof of \eqref{eqn:weaksol_temp1}, we get
\begin{align*}
\lim_{k \rightarrow \infty}
\int_M \big \langle \Delta_k^{\frac{m}{2}}u_k, \Delta_k^{\frac{m}{2}} \big( D\pi(u_k) \chi \big) \big\rangle\, dM_{g_k} =0.
\end{align*}
Now (\ref{eqn:metric}) and (\ref{eqn:uk_v_strong}) are sufficient to guarantee that the rest of proof runs through as in the proof of Lemma \ref{lem:u*smooth}, and finally it yields
\begin{align*}
0= & \lim_{k \rightarrow \infty}
\int_M \big \langle \Delta_k^{\frac{m}{2}}u_k, \Delta_k^{\frac{m}{2}} \big( D\pi(u_k) \chi \big) \big\rangle\, dM_{g_k} \\
=& \int_{\B_{R_0}} \bigg \langle \Delta^{\frac{m}{2}}\big(v(x)\big), \Delta^{\frac{m}{2}} \big( D\pi(v(x)) \chi(x) \big) \bigg\rangle\, dx\\
=& \int_{\R^{2m}} \bigg \langle \Delta^{\frac{m}{2}}\big(v(x)\big), \Delta^{\frac{m}{2}} \big( D\pi(v(x)) \chi(x) \big) \bigg\rangle\, dx,
\end{align*}
which implies that $v(x)$ is a weakly $m$-polyharmonic map from $\R^{2m}$ into $N$. Due to the regularity result of Gastel and Scheven \cite{GS}, $v(x)$ is smooth on $\R^{2m}$. The proof is completed.\qedhere
\end{proof}


\end{document}